\documentclass[11pt, amsfonts]{amsart}

%%%%% style %%%%%

\usepackage{amsmath,amssymb,amsthm}
\usepackage[all]{xy}
\usepackage[dvipdfm,colorlinks=true]{hyperref}

%%%%% layout %%%%%

\textwidth 6in
\oddsidemargin .25in
\evensidemargin .25in
\parskip .05in
\parindent .0pt

\numberwithin{equation}{section}

\SelectTips{eu}{12}

%%%%% theorems %%%%%

\newtheorem{theorem}{Theorem}[section]
\newtheorem{proposition}[theorem]{Proposition}
\newtheorem{lemma}[theorem]{Lemma}
\newtheorem{corollary}[theorem]{Corollary}

\theoremstyle{definition}

\newtheorem{example}[theorem]{Example}

\theoremstyle{remark}

%%%%% new commands %%%%%

\newcommand{\Z}{\mathbb{Z}}
\newcommand{\R}{\mathbb{R}}
\newcommand{\C}{\mathbb{C}}
\newcommand{\F}{\mathbb{F}}
\newcommand{\Ad}{\mathrm{Ad}}

%%%%% title page %%%%%

\title[Right-angled Coxeter quandles and polyhedral products]{Right-angled Coxeter quandles\\and polyhedral products}

\author{Daisuke Kishimoto}
\address{Department of Mathematics, Kyoto University, Kyoto, 606-8502, Japan}
\email{kishi@math.kyoto-u.ac.jp}
%\thanks{The author was partly supported by JSPS KAKENHI (No. 17K05248)}

%\subjclass[2010]{}
%\keywords{}

\begin{document}

\baselineskip.525cm

\maketitle

\begin{abstract}
To a Coxeter group $W$ one associates a quandle $X_W$ from which one constructs a group $\Ad(X_W)$. This group turns out to be an intermediate object between $W$ and the associated Artin group. By using a result of Akita, we prove that $\Ad(X_W)$ is given by a pullback involving $W$, and by using this pullback, we show that the classifying space of $\Ad(X_W)$ is given by a space called a polyhedral product whenever $W$ is right-angled. Two applications of this description of the classifying space are given.
\end{abstract}

%%%%% Section 1 %%%%%

\section{Introduction}

A quandle is a set with a binary operation satisfying three conditions. Since these three conditions are thought of as algebraic abstraction of the Reidemeister moves of knots, a quandle has been intensively studied in low dimensional topology. The three conditions are also thought of as axiomatization of conjugation in a group, so it has been studied in representation theory as well. In this paper, we comprehend a quandle in the latter sense.

We look at the following connection between groups and quandles. Any conjugation closed subset $X$ of a group $G$ can be regarded as a quandle by a binary operation given by conjugation, and to any quandle $Y$ one associates a group $\Ad(Y)$ which is called the adjoint group. Thus any conjugation closed subset $X$ of a group yields a new group $\Ad(X)$.

We now consider a Coxeter group $W$. The set of reflections $X_W$ of $W$ is closed under conjugation, so $X_W$ is a quandle which we call the Coxeter quandle associated with $W$. Then we get a group $\Ad(X_W)$ which is the object to study in this paper. The adjoint group $\Ad(X_W)$ has been studied mainly in connection with representation theory \cite{AFGV,E}, and there are few results on its topology \cite{N}. This paper studies the topology of $\Ad(X_W)$ by applying the recent result of Akita \cite{A}, and in particular, we show that if $W$ is right-angled, then the classifying space of $\Ad(X_W)$ is given by a space which is called a polyhedral product.

The following fundamental property of $\Ad(X_W)$ suggests that $\Ad(X_W)$ possibly gives a new direction for the study of Coxeter groups and Artin groups. The symmetric group $\Sigma_n$ of $n$ letters is a Coxeter group and its associated Artin group is the braid group $B_n$ of $n$ strands. Then there is a natural epimorphism $B_n\to\Sigma_n$. In \cite{AFGV}, it is shown that $\Ad(X_{\Sigma_n})$ is an intermediate object between $\Sigma_n$ and $B_n$ in the sense that the epimorphism $B_n\to\Sigma_n$ factors as the composite of epimorphisms $B_n\to\Ad(X_{\Sigma_n})\to\Sigma_n$. Akita generalized this result to an arbitrary Coxeter group. Let $A_W$ be the Artin group associated with a Coxeter group $W$.

\begin{theorem}
\label{pi}
For an arbitrary Coxeter group $W$, the epimorphism $A_W\to W$ factors as the composite of epimorphisms
$$A_W\to\Ad(X_W)\to W.$$
\end{theorem}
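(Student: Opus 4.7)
The plan is to exhibit two surjective homomorphisms $\pi\colon A_W\to\Ad(X_W)$ and $\rho\colon\Ad(X_W)\to W$ whose composition is the canonical surjection $A_W\to W$. Defining $\rho$ is routine: send each generator $e_x$ of $\Ad(X_W)$ (one for each reflection $x\in X_W$) to $x\in W$. With the quandle operation written as $y*x=xyx^{-1}$, the defining relations $e_{y*x}=e_xe_ye_x^{-1}$ of the adjoint group map to the tautology $xyx^{-1}=xyx^{-1}$ in $W$, so $\rho$ is well defined. Surjectivity is immediate since reflections generate $W$.

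Next, define $\pi$ on the standard Artin generators by $\sigma_i\mapsto e_{s_i}$. The substance of the theorem is to show this assignment respects the braid relations
\[
\underbrace{\sigma_i\sigma_j\cdots}_{m_{ij}\text{ letters}}=\underbrace{\sigma_j\sigma_i\cdots}_{m_{ij}\text{ letters}},
\]
and this is the main obstacle. The key input is the following identity in the quandle $X_W$, which is a direct consequence of $(s_is_j)^{m_{ij}}=1$ in $W$: iterating the quandle operation $m_{ij}-1$ times starting from $s_i$ with alternating arguments $s_j,s_i,s_j,\dots$ yields $s_j$ when $m_{ij}$ is odd and $s_i$ when $m_{ij}$ is even. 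Unwinding this identity in $\Ad(X_W)$ via the defining relation $e_{y*x}=e_xe_ye_x^{-1}$ produces an equation expressing $e_{s_i}$ (resp.\ $e_{s_j}$) as the conjugate of the opposite generator by an alternating word in $e_{s_i}$ and $e_{s_j}$; routine algebraic rearrangement then collects the terms into the desired braid relation, treated separately for even and odd $m_{ij}$. The bookkeeping is already transparent in the cases $m_{ij}=3,4$ and extends without surprises to general $m_{ij}$.

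Finally, the composite $\rho\circ\pi$ sends each $\sigma_i$ to $s_i$, so it agrees with the standard projection $A_W\to W$. Surjectivity of $\pi$ follows from the observation that every reflection $x\in X_W$ has the form $x=ws_iw^{-1}$ for some $w\in W$: writing $w$ as a product of simple reflections and iterating the adjoint-group relation expresses $e_x$ as a word in $e_{s_1},\dots,e_{s_n}$. Hence the images $\pi(\sigma_i)=e_{s_i}$ already generate all of $\Ad(X_W)$, completing the factorization.
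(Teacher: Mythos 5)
Your proposal is correct and follows essentially the same route as the paper: the map $e_x\mapsto x$ gives the epimorphism $\Ad(X_W)\to W$, and the map $a_s\mapsto e_s$ is shown to respect the braid relations by unwinding the quandle identity coming from $(st)^{m(s,t)}=1$, with surjectivity from expressing $e_{w^{-1}sw}$ as a conjugate of $e_s$ by the $e_{s_i}$. The only cosmetic difference is your opposite conjugation convention for the quandle operation, which does not affect the argument.
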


Akita \cite{A} studied further structures of $\Ad(X_W)$ and generalized Eisermann's result \cite{E} on $\Ad(X_{\Sigma_n})$ concerning abelianization. From this, we will deduce that $\Ad(X_W)$ is given by a certain pullback involving $W$ and its abelianization. 

When $W$ is right-angled, it is known that the classifying spaces of $W$ and $A_W$ are given by polyhedral products. Then it is natural to ask whether the classifying space of $\Ad(X_W)$ is a polyhedral product or not, if $W$ is right-angled. We will give an affirmative answer to this question by using the above pullback description of $\Ad(X_W)$. Then we will showe two application of this description of the classifying space: a stable splitting of the classifying space of $\Ad(X_W)$ and calculation of the mod 2 cohomology of $\Ad(X_W)$.

\emph{Acknowledgement:} The author is grateful to Toshiyuki Akito for explaining his work \cite{A} and giving useful comments. Thanks also go to Takefumi Nosaka for comments and to Ye Liu and Mentor Stafa for careful reading of the first draft. The author was partly supported by JSPS KAKENHI (No.\,17K05248).

%%%%% Section 2 %%%%%

\section{Recollection on Coxeter groups}

Recall that a pair $(W,S)$ of a group $W$ and a set $S$ is called a Coxeter system if there is given a map $m\colon S\times S\to\mathbb{N}\cup\{\infty\}$, called the Coxeter matrix, satisfying the following conditions:

\begin{enumerate}
\item $m(s,t)=m(t,s)$ for any $s,t$;
\item $m(s,t)=1$ if and only if $s=t$;
\item $W$ is defined by the presentation
$$W=\langle s\in S\,\vert\,(st)^{m(s,t)}=1\text{ for }m(s,t)<\infty\rangle.$$
\end{enumerate}

We call the group $W$ a Coxeter group, by which we often mean the Coxeter system $(W,S)$ too.

The Artin group associated with a Coxeter system $(W,S)$ is defined by
$$A_W=\langle a_s\;(s\in S)\,\vert\,\underbrace{a_sa_ta_s\cdots}_{m(s,t)}=\underbrace{a_ta_sa_t\cdots}_{m(t.s)}\text{ for }2\le m(s,t)<\infty\rangle.$$
Since the Coxeter group is alnternatively presented as
$$W=\langle s\in S\,\vert\,\underbrace{sts\cdots}_{m(s,t)}=\underbrace{tst\cdots}_{m(t.s)}\text{ for }m(s,t)<\infty\rangle,$$
one gets:

\begin{proposition}
For a Coxeter system $(W,S)$, the assignment 
$$\pi\colon A_W\to W,\quad a_s\mapsto s\quad(s\in S)$$
 is a well-defined epimorphism.
\end{proposition}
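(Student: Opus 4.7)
The plan is to invoke the universal property of group presentations (von Dyck's theorem), using the alternative presentation of $W$ displayed immediately above the statement. First I would extend the assignment $a_s \mapsto s$ to a homomorphism from the free group on the generators $\{a_s : s \in S\}$ to $W$. The real content is to check that every defining relation of $A_W$ is mapped to an identity in $W$, so that this homomorphism descends to a well-defined map $\pi \colon A_W \to W$.

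The defining relations of $A_W$ are
$$\underbrace{a_s a_t a_s \cdots}_{m(s,t)} = \underbrace{a_t a_s a_t \cdots}_{m(t,s)}, \qquad 2 \le m(s,t) < \infty,$$
which are sent under $a_s \mapsto s$ to
$$\underbrace{s t s \cdots}_{m(s,t)} = \underbrace{t s t \cdots}_{m(t,s)}, \qquad 2 \le m(s,t) < \infty.$$
These form a subset of the defining relations of $W$ in the alternative presentation recalled in the excerpt; the remaining Coxeter relations (the case $m(s,t) = 1$, meaning $s = t$) are tautologies and need no verification. Hence the map descends to a well-defined homomorphism $\pi \colon A_W \to W$.

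For surjectivity, the image of $\pi$ contains every generator $s$ of $W$ since $\pi(a_s) = s$, so $\pi$ is onto. There is no substantive obstacle here: the entire purpose of recalling the alternative presentation of $W$ immediately before the statement is to render the Artin relations a literal subset of the Coxeter relations, reducing the proposition to a direct application of von Dyck's theorem.
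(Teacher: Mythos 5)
Your proposal is correct and is precisely the argument the paper intends: the paper gives no explicit proof beyond displaying the alternative presentation of $W$ and noting that ``one gets'' the proposition, which is exactly your von Dyck / universal-property argument that the images of the Artin relations are among the defining relations of $W$. Your write-up simply makes explicit what the paper leaves implicit, and the surjectivity observation matches as well.
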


\begin{example}
The symmetric group of $n$ letters $\Sigma_n$ is a Coxeter group with the generating set $\{\sigma_1,\ldots,\sigma_{n-1}\}$, where $\sigma_i$ is the transposition $(i\;i+1)$. Indeed, $\Sigma_n$ is generated by $\sigma_1,\ldots,\sigma_{n-1}$ subject to the relations $\sigma_i^2=1$, $\sigma_i\sigma_{i+1}\sigma_i=\sigma_{i+1}\sigma_i\sigma_{i+1}$ for $i=1,\ldots,n-1$ and $\sigma_i\sigma_j=\sigma_j\sigma_i$ for $|i-j|\ge 2$. Then the associated Artin group is the braid group with $n$ strands $B_n$ and the map $\pi\colon B_n\to\Sigma_n$ maps each braid to a permutation given by its ends.
\end{example}

Let $\mathcal{R}_W$ be the complete set of representatives of  $S/\sim$, where $\sim$ is given by conjugation by elements of $W$, and let $c(W)$ be the cardinality of $\mathcal{R}_W$. For instance, we have $c(\Sigma_n)=1$. Let $G_\mathrm{ab}$ denote the abelianization of a group $G$. As in \cite{BMMN}, one has:
 
\begin{proposition}
\label{abelianization}
There are isomorphisms $W_\mathrm{ab}\cong(\Z/2)^{c(W)}$ and $(A_W)_\mathrm{ab}\cong\Z^{c(W)}$.
\end{proposition}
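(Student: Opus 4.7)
The plan is to compute both abelianizations directly from the given presentations. For the Coxeter group, abelianizing sends the relation $(st)^{m(s,t)}=1$ to $s^{m(s,t)}t^{m(s,t)}=1$; because $s^2=1$ for every generator (from the case $s=t$, where $m(s,s)=1$ would give $s^{2}=1$ after using the alternative presentation, or equivalently from $m(s,s) = 1$ in the original), this relation is automatic when $m(s,t)$ is even and reduces to $s=t$ when $m(s,t)$ is odd. For the Artin group, abelianizing the braid-type relation $\underbrace{a_sa_ta_s\cdots}_{m(s,t)}=\underbrace{a_ta_sa_t\cdots}_{m(s,t)}$ yields no new relation when $m(s,t)$ is even, but gives $a_s=a_t$ when $m(s,t)$ is odd (because one side then contains one more $a_s$ than $a_t$, and vice versa).

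The second ingredient I would invoke is the classical fact (due to Bourbaki / Deodhar) that two generators $s,t\in S$ are conjugate in $W$ if and only if there exists a sequence $s=s_0,s_1,\dots,s_k=t$ in $S$ with each $m(s_{i-1},s_i)$ odd. Thus the equivalence relation on $S$ induced by $W$-conjugacy is exactly the one generated by $s\sim t$ whenever $m(s,t)$ is odd, and its equivalence classes are by definition in bijection with $\mathcal{R}_W$.

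Combining these two observations, $W_{\mathrm{ab}}$ is presented as the abelian group with one order-$2$ generator for each conjugacy class of $S$, which gives $W_{\mathrm{ab}}\cong(\Z/2)^{c(W)}$, and $(A_W)_{\mathrm{ab}}$ is presented as the abelian group freely generated by those same classes, giving $(A_W)_{\mathrm{ab}}\cong\Z^{c(W)}$. The ``only if'' direction of the conjugacy characterization is immediate by inspection of the relations, so no obstacle arises there; the slightly subtle direction (that odd-labeled chains suffice to generate all conjugations of elements of $S$) is the one nontrivial input, but is standard and can simply be cited from \cite{BMMN}. After that the computation is purely formal, so I do not expect any real obstacle.
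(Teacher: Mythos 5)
Your computation is correct, and it is worth noting that the paper itself gives no argument for this proposition: it simply cites \cite{BMMN} (and later repeats that $W_\mathrm{ab}$ is a $\Z/2$-vector space with basis indexed by $\mathcal{R}_W$, again by citation). Your direct abelianization of the two presentations is the standard proof and all the individual steps check out: $(st)^{m}$ abelianizes to $s^mt^m$, which with $s^2=t^2=1$ is vacuous for $m$ even and forces $s=t$ for $m$ odd, while the braid relation is vacuous for $m$ even and forces $a_s=a_t$ for $m$ odd. Two small remarks. First, you have the two directions of the conjugacy criterion switched: the \emph{easy} direction is that an odd label forces conjugacy (from $\underbrace{sts\cdots}_{m}=\underbrace{tst\cdots}_{m}$ with $m$ odd one reads off $(ts)^{-(m-1)/2}\,s\,(ts)^{(m-1)/2}=t$), whereas ``conjugate implies joined by an odd chain'' is the nontrivial direction --- which is exactly the one you propose to cite, so this is only a labeling slip. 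Second, you do not actually need to import that nontrivial direction: once you know $W_\mathrm{ab}\cong(\Z/2)^{S/\approx}$, where $\approx$ is the equivalence generated by odd labels, with distinct $\approx$-classes mapping to distinct basis vectors, the fact that conjugate elements have equal images in the abelianization already shows that $W$-conjugacy on $S$ refines $\approx$; combined with the easy direction this gives $S/\approx = S/\sim$ and hence the exponent $c(W)$ without any external input. Either way the argument is complete and more self-contained than the paper's.
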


A Coxeter system $(W,S)$ is called right-angled if the Coxeter matrix $m$ satisfies $m(s,t)=1,2,\infty$ for any $s,t\in S$. By the definition of $c(W)$, one gets:

\begin{proposition}
If a Coxeter system $(W,S)$ is right-angled, then $c(W)=|S|$.
\end{proposition}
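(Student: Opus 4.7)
The plan is to show that, in a right-angled Coxeter system $(W,S)$, no two distinct elements of $S$ are conjugate in $W$, which yields $|S/\!\sim| = |S|$ and hence $c(W) = |S|$.

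I would argue via the abelianization. Since $(W,S)$ is right-angled, the defining relations of $W$ are $s^2 = 1$ for $s \in S$ together with $(st)^2 = 1$ (equivalently $st = ts$) whenever $m(s,t) = 2$; the commuting relations become trivial under abelianization. Thus $W_\mathrm{ab}$ admits a presentation as an abelian group on the images of the elements of $S$ with only the relations $s^2 = 1$, giving $W_\mathrm{ab} \cong (\Z/2)^{|S|}$ with the images of distinct elements of $S$ forming distinct basis vectors. Since conjugate elements of $W$ share the same image in $W_\mathrm{ab}$, any two conjugate generators must coincide. Combined with Proposition \ref{abelianization}, which asserts $W_\mathrm{ab} \cong (\Z/2)^{c(W)}$, this forces $c(W) = |S|$.

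There is no serious obstacle: the argument is a direct unfolding of the definitions, relying only on the fact that every right-angled Coxeter relation beyond $s^2 = 1$ becomes trivial under abelianization. A slightly different route, bypassing the abelianization, would invoke the standard criterion that in any Coxeter system two generators $s, t \in S$ are conjugate in $W$ if and only if they can be joined by a sequence $s = s_0, s_1, \ldots, s_n = t$ with each $m(s_i, s_{i+1})$ odd; in the right-angled case the only odd value attained by $m$ is $1$, so the criterion collapses to $s = t$ and yields the same conclusion.
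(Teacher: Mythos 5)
Your proof is correct. The paper itself offers no argument here---it simply states ``By the definition of $c(W)$, one gets'' and asserts the proposition---so you are supplying the content the author treats as immediate, namely that distinct generators of a right-angled Coxeter system are never conjugate. Your abelianization argument is sound: in the right-angled case the relations $(st)^2=1$ become consequences of $s^2=1$ after abelianizing, so $W_\mathrm{ab}\cong(\Z/2)^{\oplus S}$ with distinct generators mapping to distinct basis vectors, and since conjugation acts trivially on $W_\mathrm{ab}$ this forces $|S/\!\sim|=|S|$; note that this direct form of the argument is preferable to comparing cardinalities with Proposition~\ref{abelianization}, and it also works for infinite $S$. Your alternative route via the standard criterion (two generators are conjugate if and only if they are joined by a chain with consecutive $m$-values odd, which in the right-angled case degenerates to equality) is equally valid and is the argument most texts would cite; either one is a legitimate filling of the gap the paper leaves.
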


For a Coxeter system $(W,S)$, we define a graph $\Gamma_W$ such that the vertex set is $S$ and vertices $s,t\in S$ are joined by an edge whenever $2\le m(s,t)<\infty$. Then if a Coxeter group $W$ is right-angled, all the information of $W$ is included in the graph $\Gamma_W$. We will see this more precisely below in terms of a graph product of groups.

%%%%% Section 3 %%%%%

\section{The adjoint group of a Coxeter quandle}

A quandle is a set $X$ with a binary operation $*\colon X\times X\to X$ satisfying the three conditions:
\begin{enumerate}
\item $x*x=x$;
\item $(x*y)*z=(x*z)*(y*z)$;
\item the map $X\to X$, $x\mapsto x*y$ is bijective for any $y\in X$.
\end{enumerate}
A quandle is related with group theory (and representation theory) as follows. For a group $G$, we put $x*y=y^{-1}xy$ for $x,y\in G$. Then one can easily check the three conditions of a quandle so that $G$ is a quandle with this binary operation. More generally, any conjugation closed subset of a group can be regarded as a quandle in the same way.

Motivated by the above construction of a quandle from a group, for a quandle $X$, we define a group
$$\Ad(X)=\langle e_x\;(x\in X)\,\vert\,e_{x*y}=e_y^{-1}e_xe_y\rangle$$
which is called the adjoint group of $X$. When $X$ is a conjugation closed subset of a group $G$ regarded as a quandle, $\Ad(X)$ is directly related with $G$.

\begin{proposition}
\label{phi}
For a conjugation closed subset $X$ of a group $G$, the assignment 
$$\phi\colon\Ad(X)\to G,\quad e_x\mapsto x\quad(x\in X)$$
is a well-defined homomorphism. Moreover, if $X$ generates $G$, then $\phi$ is surjective.
\end{proposition}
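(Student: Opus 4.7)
The plan is to invoke the universal property of the group defined by generators and relations. By construction, $\Ad(X)$ is the quotient of the free group on symbols $\{e_x \mid x \in X\}$ by the normal subgroup generated by the relators $e_{x*y}e_y^{-1}e_x^{-1}e_y$ for $x,y \in X$. Hence there is a unique homomorphism from the free group to $G$ sending each $e_x$ to $x$, and the task reduces to showing that every such relator is killed in $G$.

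To verify this, fix $x,y \in X$. Since the quandle structure on $X$ is the one induced by conjugation in $G$, we have $x*y = y^{-1}xy$ as elements of $G$. Applying the assignment $e_x \mapsto x$ to both sides of the relation $e_{x*y} = e_y^{-1}e_xe_y$ therefore produces the tautology $y^{-1}xy = y^{-1}xy$ in $G$. Thus every defining relator lies in the kernel of the map from the free group on $\{e_x\}$, and the assignment descends to a well-defined homomorphism $\phi\colon\Ad(X)\to G$. For the surjectivity clause, suppose $X$ generates $G$. Then any $g \in G$ can be written as a word $g = x_1^{\epsilon_1}\cdots x_n^{\epsilon_n}$ with $x_i \in X$ and $\epsilon_i \in \{\pm 1\}$, and $e_{x_1}^{\epsilon_1}\cdots e_{x_n}^{\epsilon_n} \in \Ad(X)$ maps under $\phi$ to $g$.

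There is no substantive obstacle here; the proposition is essentially an unpacking of the definition of $\Ad(X)$, and its content lies in matching the formal symbol $x*y$ appearing in the defining relators with the actual conjugate $y^{-1}xy$ in $G$. The one point worth flagging is that this match is exactly what is guaranteed by the hypothesis that $X$ is conjugation-closed in $G$ and is being regarded as a quandle via that conjugation; without this, the quandle operation on $X$ and the group operation in $G$ would not be compatible and $\phi$ would not be defined.
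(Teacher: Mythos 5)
Your proposal is correct and follows essentially the same route as the paper's proof: both verify that the defining relation $e_{x*y}=e_y^{-1}e_xe_y$ is respected because $x*y=y^{-1}xy$ in $G$, and both dispatch surjectivity by writing an arbitrary element of $G$ as a word in elements of $X$. Your version merely spells out the universal-property framing and the surjectivity step that the paper leaves as ``obvious.''
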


\begin{proof}
For $x,y\in X$, we have $\phi(e_{x*y})=x*y=y^{-1}xy=\phi(e_y^{-1}e_xe_y)$, implying that the map $\phi$ is a well-defined homomorphism. The remaining statement is obvious.
\end{proof}

Let $(W,S)$ be a Coxeter system. An element of $W$ of the form $w^{-1}sw$ for $w\in W$ and $s\in S$ is called a reflection of $W$. Then the set of reflections of $W$, denoted $X_W$, is a conjugation closed subset of $W$, so $X_W$ is a quandle which is called the Coxeter quandle associated with a Coxeter system $(W,S)$. Since $W$ is generated by $X_W$, we have the following by Proposition \ref{phi}.

\begin{corollary}
\label{phi_Coxeter}
For a Coxeter group $W$, the map $\phi\colon\Ad(X_W)\to W$ is an epimorphism.
\end{corollary}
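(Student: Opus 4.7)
The plan is to deduce this directly from Proposition \ref{phi}, so almost all of the work has already been done; what remains is to verify the two hypotheses of that proposition for the pair $(X_W, W)$.

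First I would observe that $X_W$ is a conjugation closed subset of $W$: this has already been stated in the paragraph preceding the corollary (by definition $X_W$ consists of all conjugates $w^{-1}sw$ for $w \in W$, $s \in S$, and conjugating such an element by any $u \in W$ yields $(wu)^{-1}s(wu) \in X_W$). Hence $X_W$ forms a quandle under conjugation, and the assignment $e_x \mapsto x$ in the definition of $\phi$ falls under the scope of Proposition \ref{phi}, guaranteeing that $\phi$ is a well-defined homomorphism.

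Next I would check surjectivity. By Proposition \ref{phi} it is enough to show that $X_W$ generates $W$. Taking $w = 1$ in the definition of a reflection, every generator $s \in S$ is itself a reflection, so $S \subseteq X_W$. Since $S$ generates $W$ by the defining presentation of a Coxeter group, the larger set $X_W$ also generates $W$. Applying the surjectivity clause of Proposition \ref{phi} then finishes the proof.

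There is no real obstacle here; the only point that needs a moment's thought is the inclusion $S \subseteq X_W$, and once this is noted the corollary is immediate. I would keep the written proof to two or three sentences, simply citing Proposition \ref{phi} and recording the observation that $S \subseteq X_W$.
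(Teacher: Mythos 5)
Your proposal is correct and follows exactly the paper's route: the paper derives the corollary from Proposition \ref{phi} by noting that $X_W$ is conjugation closed and that $W$ is generated by $X_W$ (since $S\subseteq X_W$). Nothing further is needed.
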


Akita \cite{A} showed that $\Ad(X_W)$ is related also with the Artin group $A_W$, where we reproduce its proof.

\begin{proposition}
\label{Phi}
For an arbitrary Coxeter system $(W,S)$, the assignment 
$$\Phi\colon A_W\to\Ad(X_W),\quad a_s\mapsto e_s\quad(s\in S)$$
is a well-defined epimorphism.
\end{proposition}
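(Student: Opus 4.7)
Plan: I would prove Proposition \ref{Phi} by first establishing the auxiliary identity
$$e_{w^{-1}sw}=(e_{s_{i_1}}\cdots e_{s_{i_k}})^{-1}e_s(e_{s_{i_1}}\cdots e_{s_{i_k}})\quad\text{in }\Ad(X_W),$$
valid for any word $s_{i_1}\cdots s_{i_k}$ in simple reflections representing $w\in W$ and any $s\in S$. This follows by induction on $k$ from the defining relation $e_{x*y}=e_y^{-1}e_xe_y$, using that each simple reflection is an involution so $x*s_i=s_ixs_i$.

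For well-definedness of $\Phi$, it suffices to verify the Artin braid relation $\underbrace{e_se_te_s\cdots}_m=\underbrace{e_te_se_t\cdots}_m$ in $\Ad(X_W)$ for every pair $s,t\in S$ with $2\le m:=m(s,t)<\infty$. Let $w_0=\underbrace{sts\cdots}_m$ be the longest element of the rank-two dihedral parabolic $\langle s,t\rangle\subset W$. A standard fact about finite dihedral Coxeter groups is that $w_0^2=1$ and conjugation by $w_0$ sends $s$ to $t$ when $m$ is odd and fixes $s$ when $m$ is even. Setting $E_m:=\underbrace{e_se_te_s\cdots}_m$ and $F_m:=\underbrace{e_te_se_t\cdots}_m$ and applying the auxiliary identity to the reduced expression $w_0=s\cdot t\cdot s\cdots$ yields
$$E_m^{-1}e_sE_m=e_{w_0^{-1}sw_0}\in\{e_s,e_t\}.$$
Transposing to $e_sE_m=E_me_u$ for the appropriate $u\in\{s,t\}$ and expanding both sides as $(m+1)$-letter alternating products in $e_s$ and $e_t$, the shared leading factor $e_s$ cancels, leaving exactly $E_m$ on the left and $F_m$ on the right. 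This establishes the braid relation, so $\Phi$ is well-defined.

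For surjectivity, every generator $e_x$ of $\Ad(X_W)$ is indexed by a reflection $x=w^{-1}sw$; picking any word $s_{i_1}\cdots s_{i_k}$ for $w$, the auxiliary identity rewrites $e_x$ as $\Phi((a_{s_{i_1}}\cdots a_{s_{i_k}})^{-1}a_s(a_{s_{i_1}}\cdots a_{s_{i_k}}))$, so every generator lies in the image of $\Phi$. The main obstacle is the cancellation step producing $E_m=F_m$ from $E_m^{-1}e_sE_m\in\{e_s,e_t\}$: in each parity of $m$ one has to check by hand that after removing the shared leading $e_s$ from the $(m+1)$-letter products, what remains on the two sides are precisely $E_m$ and $F_m$. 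Everything else is routine bookkeeping with the adjoint group's defining relation.
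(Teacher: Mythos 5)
Your proof is correct and follows essentially the same route as the paper's: both rest on the iterated conjugation identity $e_{w^{-1}sw}=(e_{s_{i_1}}\cdots e_{s_{i_k}})^{-1}e_s(e_{s_{i_1}}\cdots e_{s_{i_k}})$ together with the dihedral-group computation identifying $w_0^{-1}sw_0$ with the last letter of $\underbrace{tst\cdots}_m$, and your surjectivity argument is identical to the paper's. The only cosmetic difference is that the paper pushes the leading $e_s$ through the $(m-1)$-letter tail of $E_m$ to rewrite it directly as $F_m$, whereas you conjugate by all of $E_m$ and then cancel a leading $e_s$ from $e_sE_m=e_sF_m$.
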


\begin{proof}
Suppose that $m(s,t)=k$ with $2\le k<\infty$ for $s,t\in S$. Then we have
$$\underbrace{(\cdots(s*t)*s\cdots)}_k=\underbrace{\cdots s^{-1}t^{-1}}_{k-1}\underbrace{sts\cdots}_k=\underbrace{\cdots s^{-1}t^{-1}}_{k-1}\underbrace{tst\cdots}_k=u$$
where $u=s$ for $n$ even and $u=t$ for $n$ odd, that is, $u$ is the last letter of the word $\underbrace{tst\cdots}_k$. Then it follows that
$$\Phi(\underbrace{a_sa_ta_s\cdots}_k)=\underbrace{e_se_te_s\cdots}_k=e_te_{s*t}\underbrace{e_te_s\cdots}_{k-1}=\underbrace{e_te_se_t\cdots}_{k-1}e_{\scriptsize\underbrace{\cdots((s*t)*s)*\cdots}_k}=\underbrace{e_te_se_t\cdots}_k,$$
implying that $\Phi$ is a well-defined homomorphism. To see that $\Phi$ is surjective, we shall show that $\Ad(X_W)$ is generated by $e_s$ for $s\in S$. For $s_1,\ldots,s_n,s\in S$ and $w=s_1\cdots s_n$, one has $w^{-1}sw=(\cdots(s*s_1)*s_2\cdots)*s_n$, implying that $e_{w^{-1}sw}=e_{\cdots(s*s_1)*s_2\cdots)*s_n}=e_{s_n}^{-1}\cdots e_{s_1}^{-1}e_se_{s_1}\cdots e_{s_n}$. Thus the proof is completed.
\end{proof}

Combining Corollary \ref{phi_Coxeter} and Proposition \ref{Phi}, we obtain Theorem \ref{pi}.

We now recall the structure theorem of $\Ad(X_W)$ due to Akita \cite{A}. Eisermann \cite{E} showed that there is a short exact sequence $1\to A_n\to\Ad(X_{\Sigma_n})\to\Z\to 0$, where $A_n$ is the alternating group of $n$ letters. Note that $A_n\cong[\Sigma_n,\Sigma_n]$ and $c(\Sigma_n)=1$. Akita \cite{A} generalized this result to an arbitrary Coxeter group.

\begin{theorem}
\label{Akita}
For any Coxeter group $W$, the following hold:
\begin{enumerate}
\item there is an isomorphism $\Ad(X_W)_\mathrm{ab}\cong\Z^{c(W)}$;
\item the map $\phi\colon\Ad(X_W)\to W$ induces an isomorphism $[\Ad(X_W),\Ad(X_W)]\cong[W,W]$.
\end{enumerate}
\end{theorem}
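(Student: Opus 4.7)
The plan is to deduce (1) by abelianizing the defining presentation of $\Ad(X_W)$, and to reduce (2) to a computation of $\ker\phi$ exploiting the fact that every element of $X_W$ is an involution in $W$. For (1), abelianizing the relation $e_{x*y}=e_y^{-1}e_xe_y$ yields $e_{x*y}=e_x$, identifying $e_x$ with $e_{x'}$ whenever $x,x'\in X_W$ are conjugate via a reflection; since reflections generate $W$, this is the same as $W$-conjugacy, giving $c(W)$ classes and hence $\Ad(X_W)_\mathrm{ab}\cong\Z^{c(W)}$.

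For (2), the key observation is that $x^2=1$ in $W$ forces $(x*y)*y=y^{-1}(y^{-1}xy)y=x$, so applying the quandle relation twice in $\Ad(X_W)$ produces $e_x=e_y^{-2}e_xe_y^2$. Hence each $e_y^2$ commutes with every generator $e_x$ and is therefore central; moreover a conjugate $e_{x'}$ of $e_x$ has the same square, so $e_x^2$ depends only on the $W$-conjugacy class of $x$. Write $a_1,\dots,a_{c(W)}$ for these central elements.

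The next step is to show $\ker\phi=\langle a_1,\dots,a_{c(W)}\rangle$. The inclusion $\supseteq$ is clear since $\phi(e_x^2)=x^2=1$. For the reverse, I would verify that the quotient $\Ad(X_W)/\langle a_1,\dots,a_{c(W)}\rangle$ admits the Coxeter presentation of $W$: after imposing $e_s^2=1$, the identities $e_se_te_s\cdots = e_te_se_t\cdots$ of length $m(s,t)$ derived in the proof of Proposition \ref{Phi} yield $(e_se_t)^{m(s,t)}=1$, and every generator $e_x$ reduces to a word in $\{e_s\}_{s\in S}$ by the same proof. Thus $s\mapsto e_s$ defines a well-defined inverse to the induced quotient map, proving equality.

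Since the $a_i$ are central, $\ker\phi$ is abelian and generated by $c(W)$ elements whose images in $\Ad(X_W)_\mathrm{ab}\cong\Z^{c(W)}$ are $2\alpha_1,\dots,2\alpha_{c(W)}$, hence independent. So $\ker\phi\cong\Z^{c(W)}$ maps isomorphically onto $2\Z^{c(W)}=\ker\phi_\mathrm{ab}$. A snake lemma argument applied to the commutative diagram of abelianization short exact sequences for $\Ad(X_W)$ and $W$ with vertical map $\phi$ then forces the restriction $\phi\colon[\Ad(X_W),\Ad(X_W)]\to[W,W]$ to be an isomorphism. The main obstacle is the identification of $\Ad(X_W)/\langle a_1,\dots,a_{c(W)}\rangle$ with $W$, which requires checking that the Coxeter relations are recovered exactly; the braid-type identities already established in the proof of Proposition \ref{Phi} do precisely that.
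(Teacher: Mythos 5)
Your argument is correct, but it is worth noting that the paper itself does not prove this theorem: it is quoted from Akita's paper \cite{A}, with only the remark that part (1) follows from Akita's identification of $\Ad(X_W)_\mathrm{ab}$ as the free abelian group on the conjugacy classes of reflections (which is exactly your abelianized-presentation computation). So for part (2) you are supplying a proof where the paper supplies a citation, and your route also inverts the paper's logical order: the paper deduces the exact sequence $0\to\Z^{c(W)}\to\Ad(X_W)\to W\to 1$ (Corollary \ref{exact}) \emph{from} Theorem \ref{Akita}(2) via the pullback square, whereas you establish $\ker\phi=\langle e_{x}^2\rangle\cong\Z^{c(W)}$ directly and then read off (2) from the fact that $\ker\phi$ injects into $\Ad(X_W)_\mathrm{ab}$, so that $\ker\phi\cap[\Ad(X_W),\Ad(X_W)]=1$. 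The key steps all check out: $y^2=1$ in $W$ gives $(x*y)*y=x$, hence $e_y^2$ central and constant on conjugacy classes; the braid identities from the proof of Proposition \ref{Phi} together with $e_s^2=1$ recover the Coxeter presentation (using the standard equivalence of $(ab)^{m}=1$ with the length-$m$ alternating relation when $a^2=b^2=1$, which the paper also invokes); and the generators $e_{w^{-1}sw}$ reduce to words in the $e_s$, so $s\mapsto e_s$ is a genuine two-sided inverse to the quotient map. Two small polish points: the appeal to the snake lemma for non-abelian groups is unnecessary --- injectivity of $\phi$ on the commutator subgroup is literally the statement $\ker\phi\cap[\Ad(X_W),\Ad(X_W)]=1$, and surjectivity onto $[W,W]$ is immediate from surjectivity of $\phi$; and you should say explicitly that killing the $c(W)$ elements $a_i$ kills \emph{every} $e_x^2$ (which follows from your observation that $e_x^2$ depends only on the conjugacy class), since that is what makes the quotient carry the full Coxeter presentation. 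What your approach buys is a self-contained proof of Akita's structure theorem and of Corollary \ref{exact} simultaneously; what the paper's approach buys is brevity, at the cost of depending on \cite{A}.
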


\begin{corollary}
For any Coxeter group $W$, there is a short exact sequence
$$1\to[W,W]\to\Ad(X_W)\xrightarrow{\mathrm{ab}}\Z^{c(W)}\to 0.$$
\end{corollary}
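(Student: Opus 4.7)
The plan is to assemble the short exact sequence directly from Theorem~\ref{Akita}. For any group $G$, the abelianization map $G\to G_{\mathrm{ab}}$ is a surjection with kernel exactly the commutator subgroup $[G,G]$, producing the canonical short exact sequence
$$1\to[G,G]\to G\xrightarrow{\mathrm{ab}}G_{\mathrm{ab}}\to 0.$$
I will apply this to $G=\Ad(X_W)$ and then rewrite the two outer terms using the two parts of Theorem~\ref{Akita}.

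Concretely, first I would invoke Theorem~\ref{Akita}(1) to replace $\Ad(X_W)_{\mathrm{ab}}$ on the right by $\Z^{c(W)}$. Then I would use Theorem~\ref{Akita}(2), which asserts that $\phi$ restricts to an isomorphism $[\Ad(X_W),\Ad(X_W)]\xrightarrow{\cong}[W,W]$, to replace the commutator subgroup on the left by $[W,W]$. Substituting these two identifications into the canonical sequence above gives exactly
$$1\to[W,W]\to\Ad(X_W)\xrightarrow{\mathrm{ab}}\Z^{c(W)}\to 0,$$
which is the desired short exact sequence.

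There is essentially no obstacle: the corollary is a formal repackaging of Theorem~\ref{Akita}, and both the surjectivity of the abelianization map and the identification of its kernel with the commutator subgroup are standard. The only point worth mentioning is that the first map in the sequence should be understood as the inverse of the isomorphism from Theorem~\ref{Akita}(2) composed with the inclusion $[\Ad(X_W),\Ad(X_W)]\hookrightarrow\Ad(X_W)$, which makes the sequence well-defined and exact.
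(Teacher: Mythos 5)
Your proof is correct and is exactly the argument the paper intends (the corollary is stated without proof as an immediate consequence of Theorem~\ref{Akita}): start from the canonical sequence $1\to[G,G]\to G\to G_{\mathrm{ab}}\to 0$ for $G=\Ad(X_W)$ and substitute the two identifications from parts (1) and (2) of that theorem. Your closing remark about the left-hand map being the inclusion composed with the inverse of the isomorphism induced by $\phi$ is the right way to make the sequence well-defined.
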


By \cite{BMMN}, $W_\mathrm{ab}$ is a $\Z/2$-vector space with a basis $\{[x]\in W_\mathrm{ab}\,\vert\,x\in\mathcal{R}_W\}$, from which the first isomorphism of Proposition \ref{abelianization} follows. On the other hand, Akita \cite{A} showed that $\Ad(X_W)_\mathrm{ab}$ is a free abelian group  with a basis $\{[e_x]\in\Ad(X_W)_\mathrm{ab}\,\vert\,x\in\mathcal{R}_W\}$, which yields Theorem \ref{Akita} (1). Then we identify the abelianization of the map $\phi\colon\Ad(X_W)\to W$ as:

\begin{lemma}
The map $\phi_\mathrm{ab}\colon\Ad(X_W)_\mathrm{ab}\to W_\mathrm{ab}$ is identified with the canonical projection $\Z^{c(W)}\to(\Z/2)^{c(W)}$.
\end{lemma}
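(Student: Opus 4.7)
The plan is to chase bases through the naturality square for abelianization. By the theorem of BMMN recalled just above the lemma, $W_{\mathrm{ab}}$ is a $\mathbb{Z}/2$-vector space with basis $\{[x] \mid x \in \mathcal{R}_W\}$, and by Akita's result, $\Ad(X_W)_{\mathrm{ab}}$ is a free abelian group with basis $\{[e_x] \mid x \in \mathcal{R}_W\}$. So the two abelian groups are identified with $\mathbb{Z}^{c(W)}$ and $(\mathbb{Z}/2)^{c(W)}$ by choosing the elements of $\mathcal{R}_W$ as the common indexing set, and the canonical projection in question sends the basis vector indexed by $x$ in the first group to the basis vector indexed by $x$ in the second.

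With these identifications fixed, the verification becomes a one-line check. The map $\phi\colon \Ad(X_W) \to W$ is defined by $e_x \mapsto x$, so the induced map on abelianizations satisfies $\phi_{\mathrm{ab}}([e_x]) = [x]$ for every $x \in X_W$, and in particular for every $x \in \mathcal{R}_W$. Since $\phi_{\mathrm{ab}}$ takes the chosen basis of $\Ad(X_W)_{\mathrm{ab}}$ bijectively onto the chosen basis of $W_{\mathrm{ab}}$, it must coincide with the canonical projection $\mathbb{Z}^{c(W)} \to (\mathbb{Z}/2)^{c(W)}$ under the identifications above.

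There is no real obstacle; the only subtle point is that one has to use the same set $\mathcal{R}_W$ of representatives on both sides, which is precisely what is made available by the two basis results. Everything else is naturality of abelianization applied to a map of generating sets.
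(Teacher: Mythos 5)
Your argument is correct and matches what the paper intends: the lemma is stated immediately after the paper records the two basis results (the $\Z/2$-basis $\{[x]\}_{x\in\mathcal{R}_W}$ of $W_{\mathrm{ab}}$ and the $\Z$-basis $\{[e_x]\}_{x\in\mathcal{R}_W}$ of $\Ad(X_W)_{\mathrm{ab}}$), and the identification of $\phi_{\mathrm{ab}}$ with the coordinatewise projection is exactly the basis-to-basis check you perform using $\phi(e_x)=x$. No gaps; this is essentially the same (implicit) proof as in the paper.
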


Thus we get a commutative diagram
\begin{equation}
\label{pb_Ad}
\xymatrix{\Ad(X_W)\ar[r]^{\mathrm{ab}}\ar[d]^\phi&\Z^{c(W)}\ar[d]^{\mathrm{proj}}\\
W\ar[r]^{\mathrm{ab}}&(\Z/2)^{c(W)}.}
\end{equation}
We show that this diagram is a pullback.

\begin{lemma}
\label{pullback_lem}
Suppose that there is a commutative square of groups
\begin{equation}
\label{pb}
\xymatrix{G_1\ar[r]^{f_1}\ar[d]^g&H_1\ar[d]^h\\
G_2\ar[r]^{f_2}&H_2}
\end{equation}
where $f_1$ are surjective. Then the square is a pullback if and only if the canonical map $\mathrm{Ker}\,f_1\to\mathrm{Ker}\,f_2$ is an isomorphism.
\end{lemma}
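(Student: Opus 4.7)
The plan is to compare $G_1$ with the set-theoretic pullback $P = \{(g_2,h_1) \in G_2 \times H_1 : f_2(g_2) = h(h_1)\}$ via the canonical comparison map $\alpha \colon G_1 \to P$, $g_1 \mapsto (g(g_1), f_1(g_1))$, which is well-defined by commutativity of \eqref{pb}. By the universal property, the square \eqref{pb} is a pullback if and only if $\alpha$ is an isomorphism, so both directions of the lemma reduce to comparing $\alpha$ with the induced map on kernels.

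For the forward implication, note that $\mathrm{Ker}(P\to H_1)=\{(g_2,1) : f_2(g_2)=1\}$, which is identified with $\mathrm{Ker}\,f_2$ via the first projection. Under this identification, the restriction of $\alpha$ to $\mathrm{Ker}\,f_1$ is exactly the canonical map $\mathrm{Ker}\,f_1 \to \mathrm{Ker}\,f_2$; so if $\alpha$ is an isomorphism, so is this restriction.

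For the converse, assume that the canonical map $\mathrm{Ker}\,f_1 \to \mathrm{Ker}\,f_2$ is an isomorphism and verify injectivity and surjectivity of $\alpha$ by an element chase. Injectivity is immediate: if $\alpha(g_1)=1$ then $g_1\in\mathrm{Ker}\,f_1$ and $g(g_1)=1$, so $g_1$ is trivial by injectivity of $\mathrm{Ker}\,f_1 \to \mathrm{Ker}\,f_2$. Surjectivity is the step that uses the hypothesis that $f_1$ is surjective. Given $(g_2,h_1)\in P$, choose $g_1'\in G_1$ with $f_1(g_1')=h_1$; then commutativity of the square gives $f_2(g_2\cdot g(g_1')^{-1})=h(h_1)\cdot h(h_1)^{-1}=1$, so $g_2\cdot g(g_1')^{-1}\in\mathrm{Ker}\,f_2$. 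Surjectivity of the induced kernel map yields $k\in\mathrm{Ker}\,f_1$ with $g(k)=g_2\cdot g(g_1')^{-1}$, and then $\alpha(kg_1')=(g_2,h_1)$.

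There is no real obstacle; this is a standard piece of general nonsense, and the only thing to keep in mind is that we are in the category of groups rather than abelian groups, so I would avoid any snake-lemma-style reasoning and carry out the explicit element chase as above, which invokes the surjectivity of $f_1$ exactly once, in the surjectivity step.
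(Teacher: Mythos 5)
Your proof is correct and takes essentially the same approach as the paper: both compare $G_1$ with the explicit pullback $P$ via the canonical comparison map and reduce the question to the induced map on kernels. The only difference is presentational --- the paper organizes this reduction as a morphism of short exact sequences (a short-five-lemma-style diagram, which is perfectly valid for nonabelian groups), whereas you carry out the equivalent element chase explicitly.
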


\begin{proof}
Let $G=\{(x,y)\in G_2\times H_1\,\vert\,f_2(x)=h(y)\}$ which is the pullback of the triad $G_2\xrightarrow{f_2}H_2\xleftarrow{h}H_1$. Let $p_1\colon G\to G_2$ and $p_2\colon G\to H_1$ be the projections, and define a map $e\colon G_1\to G$ by $e(x)=(g(x),f_1(x))$ for $x\in G_1$. Then we have $p_1\circ e=g$ and since $f_1$ is surjective, the projection $p_2$ is surjective too. Moreover, the commutative square \eqref{pb} extends to the following commutative diagram with exact columns and rows
$$\xymatrix{1\ar[r]&\mathrm{Ker}\,f_1\ar[r]\ar[d]^{\bar{e}}&G_1\ar[r]^{f_1}\ar[d]^e&H_1\ar[r]\ar@{=}[d]&1\\
1\ar[r]&\mathrm{Ker}\,p_2\ar[r]\ar[d]^{\bar{p}_1}&G\ar[r]^{p_2}\ar[d]^{p_1}&H_1\ar[r]\ar[d]^h&1\\
1\ar[r]&\mathrm{Ker}\,f_2\ar[r]&G_2\ar[r]^{f_2}&H_2}$$
The square diagram \eqref{pb} is a pullback if and only if the map $e\colon G_1\to G$ is an isomorphism. By the above diagram, the latter is equivalent to that the map $\bar{e}$ is an isomorphism. One sees that $\mathrm{Ker}\,p_2\cong\mathrm{Ker}\,f_2$ and the map $\bar{p}_1$ is identified with the identity map. Thus the map $\bar{e}$ is identified with the canonical map $\mathrm{Ker}\,f_1\to\mathrm{Ker}\,f_2$, completing the proof.
\end{proof}

\begin{theorem}
\label{pullback_thm}
The commutative square \eqref{pb_Ad} is a pullback.
\end{theorem}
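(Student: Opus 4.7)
The plan is to deduce Theorem \ref{pullback_thm} directly from Lemma \ref{pullback_lem} combined with Theorem \ref{Akita}. The pullback criterion asks us to verify two things about the square \eqref{pb_Ad}: that the top horizontal map $\mathrm{ab}\colon\Ad(X_W)\to\Z^{c(W)}$ is surjective, and that the induced map on the vertical kernels is an isomorphism.

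Surjectivity of the top row is immediate: any abelianization map $G\to G_\mathrm{ab}$ is surjective, and $\Ad(X_W)_\mathrm{ab}\cong\Z^{c(W)}$ by Theorem \ref{Akita}(1). For the kernels, observe that the kernel of the abelianization homomorphism of any group is by definition its commutator subgroup. Hence the top horizontal map in \eqref{pb_Ad} has kernel $[\Ad(X_W),\Ad(X_W)]$, and the bottom horizontal map has kernel $[W,W]$. The left vertical map $\phi$ restricts on these kernels to the canonical map $[\Ad(X_W),\Ad(X_W)]\to[W,W]$, which is an isomorphism by Theorem \ref{Akita}(2).

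Applying Lemma \ref{pullback_lem} with $f_1=\mathrm{ab}$, $f_2=\mathrm{ab}$, $g=\phi$, $h=\mathrm{proj}$, both hypotheses are satisfied, and we conclude that \eqref{pb_Ad} is a pullback square. The only point that requires any care is confirming that the restriction of $\phi$ to the commutator subgroup of $\Ad(X_W)$ really coincides with the canonical map appearing in Lemma \ref{pullback_lem}; this is automatic from the fact that $\phi$ fits into a commutative square whose horizontal arrows are the abelianization maps, so the induced map on kernels is exactly the restriction of $\phi$.

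No serious obstacle is expected here, since all the substantive work has already been packaged into Theorem \ref{Akita} and Lemma \ref{pullback_lem}; this theorem is essentially just the assembly of those two results.
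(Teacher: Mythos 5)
Your proof is correct and follows exactly the paper's route: the paper's own proof is the one-line "Combine Theorem \ref{Akita} (2) and Lemma \ref{pullback_lem}," and you have simply spelled out the verification that the kernels of the horizontal abelianization maps are the commutator subgroups and that $\phi$ restricts to the isomorphism $[\Ad(X_W),\Ad(X_W)]\cong[W,W]$.
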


\begin{proof}
Combine Theorem \ref{Akita} (2) and Lemma \ref{pullback_lem}.
\end{proof}

In \cite{AFGV} it is proved that there is a short exact sequence $0\to\Z\to\Ad(X_{\Sigma_n})\to\Sigma_n\to 1$, and this was also generalized by Akita \cite{A} to an arbitrary Coxeter group. We can reprove this by applying Theorem \ref{pullback_thm}

\begin{corollary}
\label{exact}
For any Coxeter group $W$, there is a short exact sequence
$$0\to\Z^{c(W)}\to\Ad(X_W)\xrightarrow{\phi}W\to 1.$$
\end{corollary}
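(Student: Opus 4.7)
The plan is to read off the short exact sequence directly from the pullback square \eqref{pb_Ad}. First I would invoke Theorem \ref{pullback_thm} to know that \eqref{pb_Ad} is a pullback, and then exploit the general fact that in a pullback square of groups whose horizontal arrows are surjective, the induced map between the kernels of the two horizontal arrows is an isomorphism. This fact is implicit in the proof of Lemma \ref{pullback_lem} (where the map $\bar p_1$ was identified with the identity), so it is essentially free.

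Applied to \eqref{pb_Ad}, this shows that $\phi\colon\Ad(X_W)\to W$ restricts to an isomorphism between $\ker(\mathrm{ab}\colon\Ad(X_W)\to\Z^{c(W)})$ and $\ker(\mathrm{ab}\colon W\to(\Z/2)^{c(W)})$; equivalently, switching the roles of rows and columns, the map $\mathrm{ab}\colon\Ad(X_W)\to\Z^{c(W)}$ restricts to an isomorphism $\ker\phi\xrightarrow{\cong}\ker(\mathrm{proj}\colon\Z^{c(W)}\to(\Z/2)^{c(W)})$. The latter kernel is clearly $(2\Z)^{c(W)}\cong\Z^{c(W)}$, so $\ker\phi\cong\Z^{c(W)}$.

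Combined with Corollary \ref{phi_Coxeter}, which gives surjectivity of $\phi$, this yields the desired short exact sequence
$$0\to\Z^{c(W)}\to\Ad(X_W)\xrightarrow{\phi}W\to 1.$$
There is no real obstacle here: all of the work has already been done in establishing the pullback description (Theorem \ref{pullback_thm}), and what remains is a purely formal manipulation of a pullback square plus the trivial identification of the kernel of the mod $2$ reduction map on $\Z^{c(W)}$.
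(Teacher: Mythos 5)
Your proposal is correct and follows essentially the same route as the paper: the paper's proof also deduces from Theorem \ref{pullback_thm} that $\ker\phi$ is isomorphic to the kernel of the projection $\Z^{c(W)}\to(\Z/2)^{c(W)}$, which is $\Z^{c(W)}$. The extra detail you give about transposing the roles of rows and columns in the pullback square is exactly the formal step the paper leaves implicit.
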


\begin{proof}
By Theorem \ref{pullback_thm}, the kernel of $\phi$ is isomorphic to the kernel of the projection $\Z^{c(W)}\to(\Z/2)^{c(W)}$, which is isomorphic to $\Z^{c(W)}$. Thus the proof is done.
\end{proof}

%%%%% Section 4 %%%%%

\section{Classifying spaces and polyhedral products}
 
 Let $K$ be a simplicial complex on the vertex set $[m]=\{1,\ldots,m\}$ and $(X,A)$ be topological pair. The polyhedral product of $(X,A)$ with respect to $K$ is defined by
$$Z(K;(X,A))=\bigcup_{\sigma\in K}(X,A)^\sigma$$
where $(X,A)^\sigma=Y_1\times\cdots\times Y_m$ such that $Y_i=X,A$ according to $i\in\sigma$ and $i\not\in\sigma$. Note that $Z(K;(X,A))$ is natural with respect to $(X,A)$ and inclusions of simplicial complexes. Polyhedral products were introduced as a generalisation of the moment-angle complex and the Davis-Januszkiewicz space which are fundamental in toric topology \cite{BBCG,DJ}, and connect algebraic geometry, combinatorics, commutative algebra, geometry, group theory, and topology. Among others, their homotopy theory is rapidly developing (cf. \cite{IK2}). 

We will use the following property of polyhedral products, which is immediately deduced from the definition. For $\emptyset\ne I\subset[m]$, the full subcomplex of $K$ on $I$ is defined by $K_I=\{\sigma\in K\,\vert\,\sigma\subset I\}$. 

\begin{proposition}
\label{retract}
For $\emptyset\ne I\subset[m]$, $Z(K_I;(X,A))$ is a retract of $Z(K;(X,A))$.
\end{proposition}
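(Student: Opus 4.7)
The plan is to construct explicit maps $\iota\colon Z(K_I;(X,A))\to Z(K;(X,A))$ and $r\colon Z(K;(X,A))\to Z(K_I;(X,A))$ and verify $r\circ\iota=\mathrm{id}$. I view the vertex set of $K_I$ as $I$, so that $Z(K_I;(X,A))$ sits inside $X^{|I|}$ while $Z(K;(X,A))$ sits inside $X^m$. Fix an element $a_0\in A$ (the only case of substance).

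First I would define $r$ as the restriction to $Z(K;(X,A))$ of the coordinate projection $X^m\to X^{|I|}$, $(x_1,\ldots,x_m)\mapsto (x_i)_{i\in I}$. To see that it lands in $Z(K_I;(X,A))$, take $x\in(X,A)^\sigma$ for some $\sigma\in K$. The $i$-th coordinate of $r(x)$ lies in $X$ when $i\in\sigma\cap I$ and in $A$ when $i\in I\setminus\sigma$, so $r(x)\in(X,A)^{\sigma\cap I}$. The essential point, which is really the whole content of the proposition, is that $K_I$ is the \emph{full} subcomplex on $I$: since $\sigma\cap I\subset\sigma\in K$ and $\sigma\cap I\subset I$, we have $\sigma\cap I\in K_I$, hence $r(x)\in Z(K_I;(X,A))$.

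Next I would define $\iota$ by padding the missing coordinates with $a_0$: set $\iota(y)_i=y_i$ for $i\in I$ and $\iota(y)_i=a_0$ for $i\notin I$. If $y\in(X,A)^\tau$ for some $\tau\in K_I$, then $\iota(y)_i\in A$ whenever $i\notin\tau$ (using $a_0\in A$ in coordinates outside $I$, and $y\in(X,A)^\tau$ in coordinates in $I\setminus\tau$), and $\tau\in K_I\subset K$, so $\iota(y)\in(X,A)^\tau\subset Z(K;(X,A))$. Both maps are continuous because they are assembled from coordinate projections and constant maps. The relation $r\circ\iota=\mathrm{id}$ is then immediate, since $r$ reads off exactly the coordinates $\iota$ stores unchanged. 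I do not anticipate a real obstacle; the only substantive bookkeeping is the observation that it is fullness of $K_I$ which guarantees $\sigma\cap I\in K_I$ and thereby makes the retraction well-defined.
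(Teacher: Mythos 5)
Your proof is correct and is exactly the argument the paper has in mind: the paper states this proposition without proof, calling it ``immediately deduced from the definition,'' and the intended deduction is precisely your projection/basepoint-padding pair of maps, with the fullness of $K_I$ guaranteeing that the projection lands in $Z(K_I;(X,A))$. (The only implicit hypothesis, satisfied in all of the paper's applications, is that $A$ is nonempty so that $a_0$ exists.)
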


For an acyclic space $X$, the acyclicity of $Z(K;(X,*))$ is completely characterized in terms of $K$. The necessity of the acyclicity of $Z(K;(X,*))$ has been often referred to \cite{DO} although it is an easy consequence of the old result of Whitehead \cite{W}. So we here give a simple proof using the result of Whitehead. A simplicial complex $K$ is called flag if $\sigma\subset[m]$ is a simplex of $K$ whenever any two elements of $\sigma$ are joined by an edge of $K$. 

\begin{proposition}
\label{acyclic}
For an acyclic space $X$, $Z(K;(X,*))$ is acyclic if and only if $K$ is flag.
\end{proposition}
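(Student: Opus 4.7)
The plan is to prove the two implications separately. For the sufficiency direction (flagness implies acyclicity), I would argue by induction on the number of simplices of $K$. Writing $K = K' \cup \{\sigma\}$ where $\sigma$ is a maximal simplex of $K$, one has the Mayer--Vietoris decomposition $Z(K;(X,*)) = Z(K';(X,*)) \cup X^\sigma$ whose intersection is the polyhedral product of $(X,*)$ over the full subcomplex of $K'$ on the vertex set of $\sigma$. Flagness of $K$ passes to $K'$ and to the relevant full subcomplex, so all three pieces are acyclic by the inductive hypothesis together with Künneth applied to the product $X^\sigma$, and the Mayer--Vietoris sequence yields acyclicity of $Z(K;(X,*))$.

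For the necessity direction, I would argue by contrapositive. Assuming $K$ is not flag, I pick $\tau \subset [m]$ of minimum cardinality among those subsets for which every pair in $\tau$ is an edge of $K$ while $\tau$ itself is not a simplex of $K$. Minimality forces $|\tau| \geq 3$ and $K_\tau = \partial \Delta^{|\tau|-1}$; accordingly $Z(K_\tau;(X,*))$ is the fat wedge $T^{|\tau|}X = \{(x_1,\dots,x_{|\tau|}) \in X^{|\tau|} : x_i = * \text{ for some } i\}$. By Proposition~\ref{retract}, $T^{|\tau|}X$ is a retract of $Z(K;(X,*))$, so from the acyclicity hypothesis it inherits acyclicity.

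To derive a contradiction, I would invoke Whitehead's classical suspension splitting $\Sigma (Y_1 \times \cdots \times Y_n) \simeq \bigvee_{\emptyset \neq I \subseteq [n]} \Sigma Y^{\wedge I}$ and its natural restriction to the fat wedge, $\Sigma T^n X \simeq \bigvee_{\emptyset \neq I \subsetneq [n]} \Sigma X^{\wedge |I|}$. This identification realises $\tilde H_*(T^{|\tau|}X)$ as a direct sum of reduced homologies of smash powers $X^{\wedge|I|}$ over the proper nonempty subsets of $[|\tau|]$, and the nontriviality inherent in the hypothesis on $X$ will force some of these summands to be nonzero, contradicting acyclicity of $T^{|\tau|}X$.

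The main obstacle I anticipate is this last comparison: extracting from Whitehead's splitting a summand whose nonvanishing is genuinely forced by the hypothesis on $X$, so that the combinatorial obstruction to flagness of $K$ surfaces as a topological obstruction. The inductive Mayer--Vietoris step for sufficiency and the reduction to the fat wedge via Proposition~\ref{retract} should, by comparison, be routine.
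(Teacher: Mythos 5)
Both directions of your argument contain a genuine gap, and the two gaps are really two faces of the same problem. For sufficiency, your induction removes a maximal simplex $\sigma$, but when $\dim\sigma\ge 2$ this destroys flagness: in $K'=K\smallsetminus\{\sigma\}$ the vertices of $\sigma$ are still pairwise joined by edges while $\sigma$ is no longer a simplex, and the full subcomplex of $K'$ on the vertex set of $\sigma$ is exactly $\partial\Delta^{\sigma}$, the prototypical non-flag complex. So neither $K'$ nor the intersection subcomplex is covered by your inductive hypothesis, and the intersection piece of your Mayer--Vietoris decomposition is precisely the fat wedge $Z(\partial\Delta^{\sigma};(X,*))$ whose non-acyclicity your necessity half is trying to establish; the two halves of your proposal require contradictory facts about this one space. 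The paper instead decomposes along a vertex, $K=\mathrm{dl}(v)\cup_{\mathrm{lk}(v)}(\mathrm{lk}(v)*\{v\})$: when $K$ is flag, both $\mathrm{dl}(v)$ and $\mathrm{lk}(v)$ are full subcomplexes of $K$ (the latter because $K$ is flag), hence flag with fewer vertices, so the induction closes, and the $\pi_1$-injectivity of the two legs, supplied by Proposition \ref{retract}, is what permits the appeal to Whitehead's gluing theorem.

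For necessity, the step you yourself identified as the main obstacle is fatal rather than merely delicate. In the splitting $\Sigma T^{n}X\simeq\bigvee_{\emptyset\ne I\subsetneq[n]}\Sigma X^{\wedge|I|}$ every summand is a suspension of a smash power of $X$, and a smash product with an acyclic factor has vanishing reduced homology by the K\"unneth theorem; so no summand can be forced to be nonzero, and your splitting in fact shows that the fat wedge of an acyclic space is acyclic. The failure of flagness is simply not visible in this kind of homological decomposition of the polyhedral product: Theorem \ref{BBCG} together with the identification of $\widehat{Z}(K_I;(X,*))$ in the proof of Lemma \ref{split} gives $\Sigma Z(K;(X,*))\simeq\bigvee_{\emptyset\ne\sigma\in K}\Sigma X^{\wedge|\sigma|}$ for \emph{any} $K$, so stable methods cannot distinguish flag from non-flag input. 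The paper detects the obstruction through homotopy groups instead: it uses the fibration $Z(K_I;(C\Omega X,\Omega X))\to Z(K_I;(X,*))\to X^{|I|}$, whose fiber is $\Sigma^{|I|-1}(\Omega X)^{\wedge|I|}$, a nonempty wedge of spheres of dimension $|I|-1\ge 2$ because $\Omega X$ is discrete and nontrivial, and concludes that the retract $Z(K_I;(X,*))$ cannot be acyclic. Any repair of your argument must engage with $\Omega X$ or the universal cover in this way; information extracted from the homology of $X$ alone will not suffice.
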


\begin{proof}
Whitehead \cite{W} proved the following. Suppose that there is a homotopy pushout of path-connected spaces
$$\xymatrix{X_{12}\ar[r]^{\alpha_1}\ar[d]^{\alpha_2}&X_1\ar[d]\\
X_2\ar[r]&X}$$
in which $X_1,X_2,X_{12}$ are acyclic and $\alpha_1,\alpha_2$ are injective in $\pi_1$. Then $X$ is acyclic. We apply this to prove the if part of the proposition.

For a vertex $v$ of $K$, let $\mathrm{lk}(v)$ and $\mathrm{dl}(v)$ denote the link and the deletion of a vertex $v$ in $K$. Then there is a pushout of simplicial complexes
$$\xymatrix{\mathrm{lk}(v)\ar[r]\ar[d]&\mathrm{lk}(v)*\{v\}\ar[d]\\
\mathrm{dl}(v)\ar[r]&K}$$
which induces a homotopy pushout
\begin{equation}
\label{po_Z}
\xymatrix{Z(\mathrm{lk}(v);(X,*))\ar[r]\ar[d]&Z(\mathrm{lk}(v);(X,*))\times X\ar[d]\\
Z(\mathrm{dl}(v);(X,*))\ar[r]&Z(K;(X,*)).}
\end{equation}
The upper horizontal arrow is obviously injective in $\pi_1$. $K$ is flag if and only if $K_V=\mathrm{lk}(v)$ for any vertex $v$, where $V$ is the vertex set of $\mathrm{lk}(v)$. Then it follows from Proposition \ref{retract} that if $K$ is flag, the left vertical arrow of \eqref{po_Z} is injective in $\pi_1$. Moreover, if $K$ is flag, then $K_I$ is flag too for any $\emptyset\ne I\subset[m]$. Then we can apply the above result of Whitehead to \eqref{po_Z} inductively on the number of vertices. Thus we obtain that if $K$ is flag, then $Z(K;(X,*))$ is acyclic for an acyclic space $X$.

Next we conversely suppose that $Z(K;(X,*))$ is acyclic for an acyclic space $X$. Assume that $K$ is not flag. Then there is $I\subset[m]$ with $|I|\ge 3$ such that $K_I$ is the boundary of the $(|I|-1)$-dimensional simplex. Recall from \cite{IK2} that there are a homotopy fibration $Z(K_I;(C\Omega X,\Omega X))\to Z(K_I;(X,*))\to X^{|I|}$ and a homotopy equivalence $Z(K_I;(C\Omega X,\Omega X))\simeq\Sigma^{|I|-1}\Omega X\wedge\cdots\wedge\Omega X$, where the number of $\Omega X$ in the wedge is $|I|$. Since $\Omega X$ is discrete, $Z(K_I;(C\Omega X,\Omega X))$ is a wedge of spheres of dimension $|I|-1\ge 2$. Then $Z(K_I;(X,*))$ is not acyclic. On the other hand, by Proposition \ref{retract}, $Z(K_I;(X,*))$ is a retract of $Z(K;(X,*))$, implying that $Z(K_I;(X,*))$ is acyclic. This is a contradiction, so $K$ is flag. Therefore the proof is completed.
\end{proof}

Let $G$ be a group and $\{G_s\}_{s\in S}$ be a family of groups such that $G_s=G$ for all $s\in S$. We denote the free product of $G_s$ for $s\in S$ by $F_S(G)$. Let $\Gamma$ be a simple graph (a graph without loops and multiple edges) with the vertex set $S$. The graph product of $G$ with respect to $\Gamma$, denoted $G^\Gamma$, is defined by dividing out $F_S(G)$ by the commuting relations $[G_s,G_t]=1$ for edges $\{s,t\}$ of $\Gamma$. Note that $G^\Gamma$ is natural with respect to homomorphisms of groups and inclusions of graphs.

\begin{lemma}
\label{pi_1}
For a path-connected space $X$, there is an isomorphism
$$\pi_1(Z(K;(X,*)))\cong\pi_1(X)^{K^{(1)}}$$
where $K^{(n)}$ denotes the $n$-skeleton of $K$.
\end{lemma}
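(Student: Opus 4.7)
The plan is to prove the lemma in two steps: first reduce to the case where $K$ is one-dimensional, then compute $\pi_1$ by induction on the number of edges using van Kampen's theorem.

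For the first step, I would endow $X$ with a CW structure having a single $0$-cell at the basepoint (CW-approximating $X$ first if necessary, which is harmless for $\pi_1$ since polyhedral product is a homotopy functor in the pair). Under the resulting product CW structure on $X^m$, a cell $e_1\times\cdots\times e_m$ lies in $Z(K;(X,*))$ exactly when $\{i : e_i\neq *\}\in K$. Any cell of dimension at most two has support of size at most two, so it belongs to $Z(K;(X,*))$ iff its support is a vertex or an edge of $K$. Consequently the $2$-skeleton of $Z(K;(X,*))$ coincides with that of $Z(K^{(1)};(X,*))$, and since $\pi_1$ depends only on the $2$-skeleton, the inclusion $Z(K^{(1)};(X,*))\hookrightarrow Z(K;(X,*))$ induces an isomorphism on fundamental groups.

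For the second step, assume $K=K^{(1)}$ is a graph with vertex set $S$, and induct on the number of edges. With no edges, $Z(K;(X,*))=\bigvee_{s\in S}X$, whose fundamental group is $F_S(\pi_1(X))=\pi_1(X)^K$, giving the base case. For the inductive step, write $K=L\cup\{\{s,t\}\}$ and decompose
$$Z(K;(X,*))=Z(L;(X,*))\cup(X_s\times X_t),$$
whose intersection is the coordinate wedge $X_s\vee X_t$ sitting inside $X_s\times X_t$ as the two axes. Replacing these closed pieces by open neighborhoods (using cofibrancy of $X\vee X\hookrightarrow X\times X$), van Kampen's theorem yields
$$\pi_1(Z(K;(X,*)))\cong\pi_1(Z(L;(X,*)))*_{\pi_1(X)*\pi_1(X)}(\pi_1(X)\times\pi_1(X)),$$
whose effect is precisely to adjoin the commuting relations $[\pi_1(X)_s,\pi_1(X)_t]=1$ to $\pi_1(Z(L;(X,*)))\cong\pi_1(X)^L$. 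The result is exactly $\pi_1(X)^K$, as required.

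The main obstacle is ensuring the van Kampen setup is rigorous: one needs the triad $(Z(L;(X,*)),\,X_s\times X_t,\,X_s\vee X_t)$ presented as a genuine open cover with path-connected pieces. This is handled by a standard CW-cofibrancy thickening, or equivalently by recognizing $Z(K;(X,*))$ as the pushout $Z(L;(X,*))\cup_{X\vee X}(X\times X)$ along cofibrations directly from the polyhedral product definition, after which the amalgamated free product computation is routine.
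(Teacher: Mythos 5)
Your proposal is correct and follows essentially the same route as the paper: reduce to the $1$-skeleton by a cellular/skeletal argument (the paper invokes cellular approximation, you make the CW structure explicit), then build up from the wedge $\bigvee_{s\in S}X$ by attaching edges one at a time and applying van Kampen to impose the commuting relations. Your version just spells out the pushout $Z(L;(X,*))\cup_{X_s\vee X_t}(X_s\times X_t)$ and the point-set hypotheses that the paper leaves implicit.
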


\begin{proof}
By the cellular approximation theorem, the inclusion $Z(K^{(1)};(X,*))\to Z(K;(X,*))$ is an isomorphism in $\pi_1$. Since $Z(K^{(0)};(X,*))$ is a wedge of $m$ copies of $X$, we have $\pi_1(Z(K^{(0)};(X,*)))\cong F_{[m]}(\pi_1(X))$. By the van Kampen theorem, attaching the edge $\{i,j\}$ adds the commutator relation of $i$-th and $j$-th $\pi_1(X)$ in $F_{[m]}(\pi_1(X))$. Thus we have proved the lemma.
\end{proof}

For a simple graph $\Gamma$, let $C(\Gamma)$ be the flag complex whose 1-skeleton is $\Gamma$.

\begin{proposition}
\label{BG_Z}
For a group $G$ and a finite simple graph $\Gamma$, there is a homotopy equivalence
$$B(G^\Gamma)\simeq Z(C(\Gamma);(BG,*))$$
which is natural with respect to $G$ and $\Gamma$.
\end{proposition}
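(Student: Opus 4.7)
The plan is to construct a natural map $\rho\colon Z(C(\Gamma);(BG,*))\to B(G^\Gamma)$ and prove it is a homotopy equivalence by induction on $|S|$. For each simplex $\sigma\in C(\Gamma)$, the vertices of $\sigma$ are pairwise joined by edges of $\Gamma$ because $C(\Gamma)$ is flag, so the subgroup of $G^\Gamma$ generated by the corresponding copies of $G$ is the direct product of $|\sigma|$ copies of $G$. The resulting maps $(BG,*)^\sigma\simeq B(G^{|\sigma|})\to B(G^\Gamma)$ agree on intersections of simplices and assemble into a map $\rho$, which is manifestly natural in $G$ and $\Gamma$. By Lemma \ref{pi_1} together with the equality $C(\Gamma)^{(1)}=\Gamma$, $\rho$ induces an isomorphism on $\pi_1$.

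The case $|S|=1$ is trivial, as both sides equal $BG$. For the inductive step, fix a vertex $v\in S$ and let $\Gamma'$ (respectively, $\Lambda$) denote the full subgraph of $\Gamma$ on $S\setminus\{v\}$ (respectively, on the set of neighbors of $v$ in $\Gamma$). The simplicial pushout $C(\Gamma)=C(\Gamma')\cup_{C(\Lambda)}(C(\Lambda)*\{v\})$ together with naturality of the polyhedral product yields the strict pushout
$$\xymatrix{Z(C(\Lambda);(BG,*))\ar[r]\ar[d]&Z(C(\Lambda);(BG,*))\times BG\ar[d]\\
Z(C(\Gamma');(BG,*))\ar[r]&Z(C(\Gamma);(BG,*))}$$
whose vertical arrows are inclusions of subcomplexes and hence cofibrations, so it is a homotopy pushout. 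Parallel to this, the defining presentation of $G^\Gamma$ exhibits it as the amalgamated free product $G^{\Gamma'}*_{G^\Lambda}(G^\Lambda\times G)$; invoking the classical fact that the inclusion of the graph product of a full subgraph is injective, both structure maps out of $G^\Lambda$ are injective, so $B(G^\Gamma)$ is the homotopy pushout of $B(G^{\Gamma'})\leftarrow B(G^\Lambda)\to B(G^\Lambda\times G)$.

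The inductive hypothesis, applied to the three corners (each involving a graph on strictly fewer vertices than $\Gamma$), shows that $\rho$ restricts to a homotopy equivalence on each corner; the comparison of homotopy pushouts then forces $\rho$ itself to be a homotopy equivalence, and naturality is preserved throughout. The main technical obstacle lies on the group side: one must decompose $G^\Gamma$ as the claimed amalgamated product when a vertex is removed and, crucially, know that the natural inclusion $G^\Lambda\hookrightarrow G^\Gamma$ associated with a full subgraph $\Lambda$ is injective, so that the pushout of the corresponding classifying spaces is genuinely a $K(G^\Gamma,1)$. These are classical results on graph products which we invoke; once granted, the remaining argument is formal.
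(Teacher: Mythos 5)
Your proof is correct, but it is organized quite differently from the paper's. The paper's proof is a two-line citation: Lemma \ref{pi_1} identifies $\pi_1(Z(C(\Gamma);(BG,*)))$ with $G^{C(\Gamma)^{(1)}}=G^\Gamma$ via van Kampen, and the link/deletion induction underlying Proposition \ref{acyclic} (Whitehead's asphericity theorem applied to the pushout \eqref{po_Z}) shows the space is aspherical, so it is a $K(G^\Gamma,1)$; no comparison map needs to be built, and no structural facts about graph products beyond the presentation are required. You instead construct an explicit natural map $\rho$ and run the same link/deletion induction at the level of classifying spaces, matching the strict pushout of polyhedral products against the amalgam decomposition $G^\Gamma\cong G^{\Gamma'}*_{G^\Lambda}(G^\Lambda\times G)$. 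This works, but it imports two external inputs the paper avoids: injectivity of the full-subgraph inclusion $G^\Lambda\hookrightarrow G^{\Gamma'}$, and the fact that the double mapping cylinder of classifying spaces over an amalgam along injections is aspherical --- the latter being exactly Whitehead's theorem again, so the two arguments ultimately rest on the same engine. What your version buys is an explicit equivalence rather than an abstract identification of two $K(\pi,1)$'s, which makes the naturality in $G$ and $\Gamma$ (needed later in Corollary \ref{phi_Phi}) more transparent. Two small points to tidy: the corner $C(\Lambda)*\{v\}$ corresponds to a graph on $|\Lambda|+1$ vertices, which is \emph{not} strictly smaller than $|S|$ when $v$ is adjacent to every other vertex, so phrase that corner as the inductive hypothesis for $\Lambda$ crossed with $BG$ (and dispose of the cone case separately); and note that defining $\rho$ only needs the homomorphism $\prod_{i\in\sigma}G_i\to G^\Gamma$, not its injectivity --- injectivity is only needed where you invoke it, for the asphericity of the amalgam.
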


\begin{proof}
The homotopy equivalence is obtained by Proposition \ref{acyclic} and Lemma \ref{pi_1}, and the naturality is obvious by the construction.
\end{proof}

We now consider Coxeter groups. If a Coxeter group $W$ is right-angled, there are isomorphisms $W\cong(\Z/2)^{\Gamma_W}$ and $A_W\cong\Z^{\Gamma_W}$. Then by Proposition \ref{BG_Z}, one gets:

\begin{corollary}
\label{W_Z}
If a Coxeter system $(W,S)$ is finitely generated and right-angled, then there are homotopy equivalences
$$BW\simeq Z(C(\Gamma_W);(\R P^\infty,*))\quad\text{and}\quad BA_W\simeq Z(C(\Gamma_W);(S^1,*)).$$
\end{corollary}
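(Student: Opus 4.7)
The plan is to identify both $W$ and $A_W$ as graph products of groups over the graph $\Gamma_W$ and then apply Proposition \ref{BG_Z}. The identifications are essentially stated in the sentence preceding the corollary, so the main content of the proof is checking that the standard Coxeter and Artin presentations reduce to graph product presentations in the right-angled case. No genuine obstacle is expected.

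First I would verify $W\cong(\Z/2)^{\Gamma_W}$. Because $(W,S)$ is right-angled, $m(s,t)\in\{1,2,\infty\}$ for all $s,t\in S$. The relation $m(s,s)=1$ forces $s^2=1$; the relation $m(s,t)=2$ forces $(st)^2=1$, which combined with $s^2=t^2=1$ is equivalent to $st=ts$; and $m(s,t)=\infty$ contributes no relation. Hence
$$W=\langle s\in S\mid s^2=1,\ st=ts\text{ whenever }\{s,t\}\text{ is an edge of }\Gamma_W\rangle,$$
which is precisely the graph product $(\Z/2)^{\Gamma_W}$. Entirely analogously, the Artin presentation reduces, in the right-angled case, to $A_W=\langle a_s\;(s\in S)\mid a_sa_t=a_ta_s\text{ for edges }\{s,t\}\text{ of }\Gamma_W\rangle$, which is the graph product $\Z^{\Gamma_W}$. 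Finite generation of $S$ ensures that $\Gamma_W$ is a finite graph, so the hypotheses of Proposition \ref{BG_Z} apply.

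Now I would apply Proposition \ref{BG_Z} twice. Taking $G=\Z/2$ and using the standard equivalence $B\Z/2\simeq\R P^\infty$ gives
$$BW\simeq B((\Z/2)^{\Gamma_W})\simeq Z(C(\Gamma_W);(B\Z/2,*))\simeq Z(C(\Gamma_W);(\R P^\infty,*)).$$
Taking $G=\Z$ and using $B\Z\simeq S^1$ gives
$$BA_W\simeq B(\Z^{\Gamma_W})\simeq Z(C(\Gamma_W);(B\Z,*))\simeq Z(C(\Gamma_W);(S^1,*)),$$
which completes the proof. The only place that requires any care is matching the graph product presentation to the Coxeter presentation, and this is merely a bookkeeping exercise once the values of the Coxeter matrix are restricted to $\{1,2,\infty\}$.
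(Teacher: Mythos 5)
Your proof is correct and follows exactly the route the paper takes: identify $W\cong(\Z/2)^{\Gamma_W}$ and $A_W\cong\Z^{\Gamma_W}$ (the paper simply asserts these isomorphisms in the sentence preceding the corollary, while you verify them from the presentations) and then apply Proposition \ref{BG_Z} with $B\Z/2\simeq\R P^\infty$ and $B\Z\simeq S^1$.
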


We are going to show that $\Ad(X_W)$ is also given by a certain polyhedral product when $W$ is right-angled. To this end, we need several lemmas. The following lemma is well known and useful, which is proved, for example, in \cite[Proposition, pp.180]{F}.

\begin{lemma}
\label{hocolim}
Let $\{F_i\to E_i\to B\}_{i\in I}$ be an $I$-diagram of homotopy fibration with a fixed base $B$. Then the sequence
$$\underset{I}{\mathrm{hocolim}}\,F_i\to\underset{I}{\mathrm{hocolim}}\,E_i\to B$$
is a homotopy fibration.
\end{lemma}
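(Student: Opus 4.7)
The plan is to prove the lemma by modelling each homotopy colimit via the standard simplicial (two-sided bar) construction, and exhibiting the canonical map $\mathrm{hocolim}_I E_i\to B$ as the geometric realisation of a levelwise Serre fibration of simplicial spaces whose strict fibres assemble to the simplicial model of $\mathrm{hocolim}_I F_i$.

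First I would functorially replace each map $E_i\to B$ by a Serre fibration via the mapping path space construction; this respects the $I$-diagrammatic structure, preserves the homotopy types of the $E_i$, and identifies $F_i$ with the strict fibre of the replaced map over the basepoint of $B$. After this reduction the diagram $F_\bullet$ is obtained by pulling back $E_\bullet$ along $\{*\}\hookrightarrow B$ level-wise in $I$.

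Next I would take as a model for $\mathrm{hocolim}_I E_i$ the geometric realisation of the simplicial space whose space of $n$-simplices is
$$\coprod_{i_0\to i_1\to\cdots\to i_n}E_{i_0},$$
the coproduct running over strings of $n$ composable morphisms in $I$. Since $B$ is constant as a diagram over $I$, the cocone $E_\bullet\to B$ induces a map of simplicial spaces to the constant simplicial space $B$ which, at each simplicial level, is a disjoint union of copies of the fibrations $E_{i_0}\to B$, hence itself a Serre fibration, whose strict fibre is precisely the corresponding level of the simplicial model of $\mathrm{hocolim}_I F_i$. Passing to geometric realisations would then yield the sequence in the statement, and one concludes by invoking the classical fact that the realisation of a levelwise Serre fibration between sufficiently cofibrant simplicial spaces is again a Serre fibration whose fibre is the realisation of the levelwise fibre.

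The main technical obstacle is verifying the properness (or Reedy cofibrancy) condition required for geometric realisation to preserve fibrations. In the applications of this lemma elsewhere in the paper, all spaces involved have the homotopy type of CW complexes and the indexing category is a finite poset arising from a simplicial complex, so this hypothesis is automatic, and the classical statement recorded in \cite[Proposition, pp.\,180]{F} applies directly.
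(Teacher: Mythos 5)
The paper does not actually prove this lemma; it only cites \cite[Proposition, p.~180]{F}, so there is no in-paper argument to compare yours against. Judged on its own, your reduction (functorial fibrant replacement over $B$, the two-sided bar model of the homotopy colimit, and the observation that each simplicial level $\coprod_{i_0\to\cdots\to i_n}E_{i_0}\to B$ is a Serre fibration with strict fibre the corresponding level of the bar model for $\mathrm{hocolim}_IF_i$) is fine. The problem is the last step, which is where all the content lives.

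The ``classical fact'' you invoke --- that geometric realisation of a levelwise Serre fibration of cofibrant simplicial spaces is a Serre fibration whose fibre is the realisation of the levelwise fibres --- is false. Any map of discrete spaces is a Serre fibration, so the inclusion of simplicial sets $\partial\Delta^1\subset\Delta^1$ is a levelwise Serre fibration of proper simplicial spaces, yet its realisation $S^0\hookrightarrow[0,1]$ is not a Serre fibration and does not have the expected fibres. The genuine classical theorems in this direction (May's results on simplicial fibrations, or the Bousfield--Friedlander theorem) require, in addition to levelwise fibrancy, that every simplicial structure map fit into a homotopy-cartesian square over the base; for the bar construction over the constant base $B$ this would force each $F_i\to F_j$ to be a weak equivalence, which is not assumed here (and fails in the paper's application, where the maps of fibres are inclusions $(F,F')^\sigma\to(F,F')^\tau$). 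So you cannot cite your way past this point: the assertion you need is essentially Puppe's theorem, i.e.\ the lemma itself. The missing idea is the comparison of strict and homotopy fibres via the path fibration: since $-\times_BP_bB$ commutes with coproducts and with geometric realisation of proper simplicial spaces, the homotopy fibre of $|\!\coprod E_{i_0}|\to B$ is $|\!\coprod (E_{i_0}\times_BP_bB)|$, each $E_{i_0}\times_BP_bB$ is weakly equivalent to the strict fibre $F_{i_0}$ because $E_{i_0}\to B$ is a fibration, and realisation preserves levelwise weak equivalences of proper simplicial spaces; this identifies the homotopy fibre with $\mathrm{hocolim}_IF_i$ without ever claiming that $\mathrm{hocolim}_IE_i\to B$ is a strict fibration. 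With that replacement your outline becomes a correct proof; as written, the decisive step is justified by a statement that is not true.
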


The following is an up-to-homotopy version of \cite[Lemma 2.3.1]{DS}.

\begin{lemma}
\label{homotopy_fib}
Let $(F,F')\to(E,E')\to(B,B)$ be a pair of homotopy fibrations such that $(F,F'),(E,E')$ are NDR pairs. Then
$$Z(K;(F,F'))\to Z(K;(E,E'))\to B^m$$
is a homotopy fibration.
\end{lemma}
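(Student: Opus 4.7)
The plan is to realize both $Z(K;(F,F'))$ and $Z(K;(E,E'))$ as homotopy colimits over the face poset of $K$ of a diagram of homotopy fibrations with fixed base $B^m$, and then to apply Lemma \ref{hocolim}. Fix a simplex $\sigma\in K$. By definition, $(F,F')^\sigma$ and $(E,E')^\sigma$ are products of $m$ factors of the form $F$ or $F'$, respectively $E$ or $E'$, depending on whether the index lies in $\sigma$. The componentwise projection to $B^m$ is built from the given homotopy fibrations $E\to B$ and $E'\to B$ in each coordinate, and a finite product of homotopy fibrations over a common base is again a homotopy fibration, so that
$$(F,F')^\sigma\to(E,E')^\sigma\to B^m$$
is a homotopy fibration.

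Letting $\sigma$ vary, for any face inclusion $\tau\subset\sigma$ the coordinatewise inclusions $F'\hookrightarrow F$ and $E'\hookrightarrow E$ on the indices in $\sigma\setminus\tau$ give compatible inclusions $(F,F')^\tau\hookrightarrow(F,F')^\sigma$ and $(E,E')^\tau\hookrightarrow(E,E')^\sigma$, all lying over the identity of $B^m$. These assemble into a diagram, indexed by the face poset of $K$, of homotopy fibrations with fixed base $B^m$. Lemma \ref{hocolim} then supplies a homotopy fibration
$$\underset{\sigma\in K}{\mathrm{hocolim}}\,(F,F')^\sigma\to\underset{\sigma\in K}{\mathrm{hocolim}}\,(E,E')^\sigma\to B^m.$$

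To finish, I would identify these homotopy colimits with the polyhedral products defined as ordinary unions. The NDR hypothesis on $(F,F')$ and $(E,E')$ guarantees that each coordinatewise inclusion appearing in the face poset diagrams is a closed cofibration; by a standard cofibrancy argument the diagrams are then cofibrant, so that their homotopy colimits are weakly equivalent to the ordinary colimits $Z(K;(F,F'))$ and $Z(K;(E,E'))$. Substituting these into the homotopy fibration above yields the claim.

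The main delicate point is the last step — propagating the NDR property from the pairs $(F,F')$ and $(E,E')$ to the product pairs $(F,F')^\sigma$, $(E,E')^\sigma$ and all face inclusions between them, so that the colimit really computes the homotopy colimit. Once this bookkeeping is in place, the lemma is a direct application of Lemma \ref{hocolim} to the simplex-by-simplex product fibrations constructed above.
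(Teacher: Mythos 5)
Your proposal is correct and follows essentially the same route as the paper: decompose the polyhedral products over the face poset of $K$, observe that each $(F,F')^\sigma\to(E,E')^\sigma\to B^m$ is a homotopy fibration natural in $\sigma$, apply Lemma \ref{hocolim}, and use the NDR hypothesis to identify the homotopy colimits with the actual colimits $Z(K;(F,F'))$ and $Z(K;(E,E'))$. The cofibrancy bookkeeping you flag as delicate is exactly the step the paper also relies on (and treats just as briefly), so there is nothing to add.
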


\begin{proof}
For any $\sigma\subset[m]$, the sequence $(F,F')^\sigma\to(E,E')^\sigma\to B^m$ is a homotopy fibration, where $(X,A)^\sigma$ is as in the definition of a polyhedral product in the previous section. Since this homotopy fibration is natural with respect to $\sigma$, we get a $F(K)$-diagram of homotopy fibrations $\{(F,F')^\sigma\to(E,E')^\sigma\to B^m\}_{\sigma\in F(K)}$, where $F(K)$ is the face poset of $K$. Then it follows from Lemma \ref{hocolim} that the sequence
$$\underset{F(K)}{\mathrm{hocolim}}\,(F,F')^\sigma\to \underset{F(K)}{\mathrm{hocolim}}\,(E,E')^\sigma\to B^m$$
is a homotopy fibration. Since $(F,F')$ is an NDR pair, the projection $\mathrm{hocolim}\,(F,F')^\sigma\to\mathrm{colim}\,(F,F')^\sigma=Z(K;(F,F'))$ is a homotopy equivalence. Similarly we get a natural homotopy equivalence $\mathrm{hocolim}\,(E,E')^\sigma\simeq Z(K;(E,E'))$. Thus the proof is completed.
\end{proof}

\begin{lemma}
\label{pb_lem}
Let $F\to E\to B$ be a homotopy fibration such that $F\to E$ is a cofibration. Then the following commutative square is a homotopy pullback.
\begin{equation}
\label{pb_fib}
\xymatrix{Z(K;(E,F))\ar[r]\ar[d]&E^m\ar[d]\\
Z(K;(B,*))\ar[r]&B^m}
\end{equation}
\end{lemma}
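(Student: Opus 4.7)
The plan is to show that the square \eqref{pb_fib} is already a strict pullback of topological spaces, and then to promote this to a homotopy pullback using that the right vertical map is a Serre fibration. First, for each simplex $\sigma\in K$ separately, I would check that $(E,F)^\sigma\subset E^m$ is exactly the preimage of $(B,*)^\sigma$ under the product map $p^m\colon E^m\to B^m$, where $p\colon E\to B$ denotes the given map. This is immediate from the definitions together with the identification $F=p^{-1}(*)$: a tuple $(x_1,\dots,x_m)\in E^m$ lies in $(E,F)^\sigma$ precisely when $x_i\in F$ for every $i\notin\sigma$, which translates to $p(x_i)=*$ for every $i\notin\sigma$.

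Taking the union over $\sigma\in K$, and using that preimages commute with unions, yields the identification $Z(K;(E,F))=(p^m)^{-1}(Z(K;(B,*)))$ as subspaces of $E^m$, which exhibits \eqref{pb_fib} itself as a strict pullback of spaces. Because $p$ is a homotopy fibration and $F\hookrightarrow E$ is a cofibration, $p$ may be replaced by a Serre fibration without disturbing the polyhedral product up to weak equivalence; then $p^m$ is also a Serre fibration, and a strict pullback along a Serre fibration is automatically a homotopy pullback, which would complete the proof.

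The main technical obstacle lies in the fibrant replacement step: one must be sure that upgrading $p$ to an actual fibration does not alter the homotopy type of $Z(K;(E,F))$, and this is precisely where the cofibration hypothesis on $F\hookrightarrow E$ is used, via NDR-pair arguments analogous to those in the proof of Lemma \ref{homotopy_fib}. A more intrinsic alternative, closer in spirit to the preceding lemmas, would be to package the simplex-wise fibrations $(E,F)^\sigma\to(B,*)^\sigma$, each with fiber $F^m$, into a homotopy fibration $F^m\to Z(K;(E,F))\to Z(K;(B,*))$, compare it to $F^m\to E^m\to B^m$ by a map that is the identity on fibers, and conclude the homotopy pullback from the long exact sequences of the two fibrations.
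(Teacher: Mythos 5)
Your route is genuinely different from the paper's. The paper never passes through a strict pullback: it applies Lemma \ref{homotopy_fib} to identify the homotopy fibers of the two \emph{horizontal} arrows of \eqref{pb_fib}, finds that both are $Z(K;(C\Omega B,\Omega B))$ with the identity map between them, and concludes. You instead exhibit the square as a literal pullback and then invoke fibrancy of the right-hand column (with a second alternative that compares the \emph{vertical} arrows as fibrations with fiber $F^m$). The pointwise observation $(E,F)^\sigma=(p^m)^{-1}((B,*)^\sigma)$ is correct and more transparent than the paper's fiber computation, but it is only valid when $p$ is an honest fibration with $F=p^{-1}(*)$. In the lemma $F\to E\to B$ is merely a \emph{homotopy} fibration --- in the application in Theorem \ref{main} it is $S^1\to M\to\R P^\infty$, where $M\to\R P^\infty$ is far from a fibration and $S^1$ is not the preimage of the basepoint --- so everything rests on the replacement step you flag but do not carry out: one must show that passing to the mapping-path fibration $\tilde p\colon\tilde E\to B$ with fiber $\tilde F$ induces a weak equivalence $Z(K;(E,F))\to Z(K;(\tilde E,\tilde F))$. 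That is exactly where the cofibration hypothesis enters, via the comparison $Z(K;(X,A))=\mathrm{colim}_{F(K)}(X,A)^\sigma\simeq\mathrm{hocolim}_{F(K)}(X,A)^\sigma$ already used in the proof of Lemma \ref{homotopy_fib}; one must also either check that $\tilde F\to\tilde E$ is again a cofibration or argue directly that the strict pullback along the fibration $\tilde p^m$ computes the relevant homotopy colimit. So the missing step is fillable with tools the paper already contains, but it is the entire content of the lemma and must be written out; as it stands your argument is a correct outline with the essential verification still open. Once the replacement is made, your second alternative becomes immediate, since the restriction of $\tilde p^m$ to the preimage of $Z(K;(B,*))$ is automatically a fibration with fiber $\tilde F^m$, and the equivalence on fibers yields the homotopy pullback just as in the paper.
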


\begin{proof}
To see that \eqref{pb_fib} is a homotopy pullback, it is sufficient to show that the natural map between the homotopy fibers of the horizontal arrows is a homotopy equivalence. By Lemma \ref{homotopy_fib}, the homotopy fibers of the both horizontal arrows are homotopy equivalent to $Z(K;(C\Omega B,\Omega B))$ and the natural map between them is identified with the identity map. Thus \eqref{pb_fib} is a homotopy pullback.
\end{proof}

\begin{lemma}
\label{pb_group}
Suppose that there is a pullback of groups
$$\xymatrix{G_1\ar[r]^g\ar[d]&G_2\ar[d]\\
H_1\ar[r]^h&H_2}$$
where $g,h$ are surjective. Then the induced square
$$\xymatrix{BG_1\ar[r]\ar[d]&BG_2\ar[d]\\
BH_1\ar[r]&BH_2}$$
is a homotopy pullback.
\end{lemma}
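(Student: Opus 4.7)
The plan is to verify the homotopy pullback property by comparing the homotopy fibers of the two horizontal maps in the induced square of classifying spaces. The surjectivity hypotheses on $g$ and $h$ will identify these homotopy fibers with classifying spaces of the kernels, and the pullback hypothesis on the group-level square, via Lemma \ref{pullback_lem}, will identify these kernels with one another.

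Concretely, since $g$ is surjective with kernel $N_g = \mathrm{Ker}\,g$, the short exact sequence $1 \to N_g \to G_1 \to G_2 \to 1$ yields a homotopy fibration $BN_g \to BG_1 \to BG_2$; analogously, setting $N_h = \mathrm{Ker}\,h$, we obtain a homotopy fibration $BN_h \to BH_1 \to BH_2$. The commutativity of the original group-theoretic square forces the restriction of the vertical map $G_1 \to H_1$ to $N_g$ to land in $N_h$, giving a canonical homomorphism $N_g \to N_h$; by the naturality of the bar construction, the induced comparison map between the two homotopy fibers of the classifying-space square is precisely $B$ applied to this homomorphism.

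I would then apply Lemma \ref{pullback_lem}, taking the lemma's surjective horizontal maps to be $g$ and $h$: the hypothesis that the group-level square is a pullback forces $N_g \to N_h$ to be an isomorphism, and hence $BN_g \to BN_h$ to be a homotopy equivalence. Since a commutative square of spaces is a homotopy pullback precisely when the induced map between the homotopy fibers of its two horizontal arrows is a weak equivalence, the induced square of classifying spaces is a homotopy pullback. There is no substantive obstacle; the only point worth explicit care is the identification of the natural map between homotopy fibers with $B(N_g \to N_h)$, which follows formally from the functoriality of $B$ and the standard fact that a short exact sequence of groups gives rise to a homotopy fibration of classifying spaces.
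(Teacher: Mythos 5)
Your proposal is correct and follows essentially the same route as the paper: identify the homotopy fibers of the horizontal arrows with $B\mathrm{Ker}\,g$ and $B\mathrm{Ker}\,h$ via the surjectivity hypotheses, and invoke Lemma \ref{pullback_lem} to see that the canonical map $\mathrm{Ker}\,g\to\mathrm{Ker}\,h$ is an isomorphism, hence induces a homotopy equivalence of fibers. The only difference is that you spell out the identification of the fiber comparison map with $B$ of the kernel map, which the paper treats as immediate.
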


\begin{proof}
It is sufficient to show that the natural map between the homotopy fibers of the horizontal arrows in the second square is a homotopy equivalence. Since $g,h$ are surjective, the homotopy fibers are $B\mathrm{Ker}\,g$ and $B\mathrm{Ker}\,h$, respectively, and the natural map between them is induced from the canonical map $\mathrm{Ker}\,g\to\mathrm{Ker}\,h$ which is an isomorphism by Lemma \ref{pullback_lem}. Thus the proof is completed.
\end{proof}

Let $M$ be the closed M\"obius band and $(M,S^1)$ be the pair of $M$ and its boundary circle.

\begin{theorem}
\label{main}
For a finitely generated right-angled Coxeter group $W$, there is a homotopy equivalence
$$B\Ad(X_W)\simeq Z(C(\Gamma_W);(M,S^1)).$$
\end{theorem}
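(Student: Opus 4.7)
The approach is to convert the pullback square of groups \eqref{pb_Ad} into a homotopy pullback of classifying spaces and then recognize the resulting homotopy pullback as one produced by Lemma \ref{pb_lem}. By Theorem \ref{pullback_thm} and Lemma \ref{pb_group}, the square
$$\xymatrix{B\Ad(X_W)\ar[r]\ar[d]&B\Z^{c(W)}\ar[d]\\
BW\ar[r]&B(\Z/2)^{c(W)}}$$
is a homotopy pullback. Since $W$ is right-angled we have $c(W)=|S|$, and by Corollary \ref{W_Z} there is a homotopy equivalence $BW\simeq Z(C(\Gamma_W);(\R P^\infty,*))$, while $B\Z^{|S|}\simeq(S^1)^{|S|}$ and $B(\Z/2)^{|S|}\simeq(\R P^\infty)^{|S|}$. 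Under these identifications, the bottom horizontal map, induced by the abelianization $W\to W_\mathrm{ab}$, agrees with the map $Z(C(\Gamma_W);(\R P^\infty,*))\to(\R P^\infty)^{|S|}$ coming from the inclusion of simplicial complexes $C(\Gamma_W)\hookrightarrow\Delta^{|S|-1}$ (the graph product over $\Gamma_W$ projects onto the graph product over the complete graph, which is the direct product), and the right vertical map is the standard map $(S^1)^{|S|}\to(\R P^\infty)^{|S|}$ induced coordinatewise by projection $\Z\to\Z/2$.

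To produce a matching homotopy pullback from the polyhedral product side, I would exhibit the homotopy fibration
$$S^1\longrightarrow M\longrightarrow\R P^\infty,$$
where $S^1\hookrightarrow M$ is the boundary inclusion. This is a genuine homotopy fibration: identifying $M$ with its core circle, the boundary inclusion realizes the degree-$2$ self-map of $S^1$, and the universal cover of $\R P^\infty$ pulls back along $M\simeq\R P^1\hookrightarrow\R P^\infty$ to the double cover $S^1\to S^1$. Moreover $S^1\hookrightarrow M$ is a CW-cofibration. Applying Lemma \ref{pb_lem} with $K=C(\Gamma_W)$ thus yields a homotopy pullback
$$\xymatrix{Z(C(\Gamma_W);(M,S^1))\ar[r]\ar[d]&M^{|S|}\ar[d]\\
Z(C(\Gamma_W);(\R P^\infty,*))\ar[r]&(\R P^\infty)^{|S|}.}$$

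The remaining step is to check that this second square has the same maps, up to homotopy, as the first. The bottom horizontal map is by construction induced by the inclusion $C(\Gamma_W)\hookrightarrow\Delta^{|S|-1}$, matching the abelianization map on $BW$ as noted, and the right vertical map $M^{|S|}\simeq(S^1)^{|S|}\to(\R P^\infty)^{|S|}$ is coordinatewise the classifying map of $S^1\to M\to\R P^\infty$, which is the nontrivial map $B\Z\to B(\Z/2)$. By uniqueness of homotopy pullbacks, this forces $B\Ad(X_W)\simeq Z(C(\Gamma_W);(M,S^1))$. The main obstacle is precisely this verification of map compatibility — in particular pinning down that the map $BW\to B(\Z/2)^{|S|}$ induced by abelianization is naturally identified with the polyhedral product map arising from $C(\Gamma_W)\hookrightarrow\Delta^{|S|-1}$ — since once the two squares are identified as homotopy pullbacks of the same diagram, the desired homotopy equivalence is automatic.
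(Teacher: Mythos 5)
Your proposal is correct and follows essentially the same route as the paper: convert the pullback of Theorem \ref{pullback_thm} into a homotopy pullback via Lemma \ref{pb_group}, realize the fibration $S^1\to M\to\R P^\infty$, apply Lemma \ref{pb_lem}, and identify the two squares using Corollary \ref{W_Z}. The only difference is that you spell out the compatibility of the maps in the two squares, which the paper leaves implicit.
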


\begin{proof}
By Theorem \ref{pullback_thm} and Lemma \ref{pb_group}, for any Coxeter group $W$, we have a homotopy pullback
$$\xymatrix{B\Ad(X_W)\ar[r]\ar[d]&(S^1)^{c(W)}\ar[d]\\
BW\ar[r]&(\R P^\infty)^{c(W)}.}$$
Consider a homotopy fibration $S^1\to M\to\R P^\infty$ where the first map is the boundary inclusion and the second map is equivalent to the bottom cell inclusion. Then by Lemma \ref{pb_lem}, there is a homotopy pullback
$$\xymatrix{Z(K;(M,S^1))\ar[r]\ar[d]&(S^1)^m\ar[d]\\
Z(K;(\R P^\infty,*))\ar[r]&(\R P^\infty)^m.}$$
Thus the proof is done by Corollary \ref{W_Z}.
\end{proof}

By Corollary \ref{W_Z} and Theorem \ref{main} together with the naturality of Proposition \ref{BG_Z}, one gets:

\begin{corollary}
\label{phi_Phi}
Let $W$ be a finitely generated right-angled Coxeter group.
\begin{enumerate}
\item The map $\Phi\colon BA_W\to B\Ad(X_W)$ is identified with
$$Z(C(\Gamma_W);(S^1,*))\to Z(C(\Gamma_W);(M,S^1))$$
which is induced from the composite $(S^1,*)\simeq(M,*)\to(M,S^1)$.
\item The map $\phi\colon B\Ad(X_W)\to BW$ is identified with
$$Z(C(\Gamma_W);(M,S^1))\to Z(C(\Gamma_W);(\R P^\infty,*))$$
which is induced from the composite $(M,S^1)\to(M/S^1,*)=(\R P^2,*)\to(\R P^\infty,*)$.
\end{enumerate}
\end{corollary}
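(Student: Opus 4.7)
The plan is to identify the maps $B\Phi$ and $B\phi$ by tracking the homotopy pullback squares constructed in the proof of Theorem \ref{main} together with the universal property of a homotopy pullback and the naturality in Proposition \ref{BG_Z}. For part (2), the homotopy equivalence $B\Ad(X_W)\simeq Z(C(\Gamma_W);(M,S^1))$ was obtained by comparing two homotopy pullbacks sharing their bottom row $BW\simeq Z(C(\Gamma_W);(\R P^\infty,*))\to(\R P^\infty)^{c(W)}$: one (from Theorem \ref{pullback_thm} and Lemma \ref{pb_group}) has $B\phi\colon B\Ad(X_W)\to BW$ as its left column, while the other (from Lemma \ref{pb_lem} applied to $S^1\to M\to\R P^\infty$) has the polyhedral product map $Z(C(\Gamma_W);(M,S^1))\to Z(C(\Gamma_W);(\R P^\infty,*))$ induced by $(M,S^1)\to(\R P^\infty,*)$ as its left column. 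Thus $B\phi$ is identified with this polyhedral product map. Because the target is a based pair, the map of pairs $(M,S^1)\to(\R P^\infty,*)$ factors through $(M/S^1,*)=(\R P^2,*)\hookrightarrow(\R P^\infty,*)$, giving the stated factorization.

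For part (1), I would characterize $B\Phi$ by the universal property of the same pullback square. The composition $B\phi\circ B\Phi$ equals $B\pi$, and since $A_W\cong\Z^{\Gamma_W}$ and $W\cong(\Z/2)^{\Gamma_W}$ are graph products with $\pi$ induced by $\Z\to\Z/2$, naturality of Proposition \ref{BG_Z} identifies $B\pi$ with the polyhedral product map $Z(C(\Gamma_W);(S^1,*))\to Z(C(\Gamma_W);(\R P^\infty,*))$ induced by the bottom-cell inclusion $(S^1,*)\to(\R P^\infty,*)$. The composition of $B\Phi$ with the other projection $B\Ad(X_W)\to(S^1)^{c(W)}$ is $B(\Phi_{\mathrm{ab}})$; since Theorem \ref{Akita}(1) describes $\Ad(X_W)_{\mathrm{ab}}$ with basis $\{[e_s]\}_{s\in\mathcal{R}_W}$ and $\Phi(a_s)=e_s$ sends generators to generators, $\Phi_{\mathrm{ab}}$ is an isomorphism, so this composition is the abelianization of $A_W$. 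Under $BA_W\simeq Z(C(\Gamma_W);(S^1,*))$, this is (up to homotopy) the natural inclusion of the polyhedral product into the ambient product $M^{c(W)}\simeq(S^1)^{c(W)}$.

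Finally, the candidate map $Z(C(\Gamma_W);(S^1,*))\to Z(C(\Gamma_W);(M,S^1))$ induced by $(S^1,*)\simeq(M,*)\to(M,S^1)$ visibly satisfies both compatibilities: its post-composition with the projection to $M^{c(W)}$ is the ambient polyhedral-product inclusion, and its post-composition with the left column $Z(C(\Gamma_W);(M,S^1))\to Z(C(\Gamma_W);(\R P^\infty,*))$ is the polyhedral product map induced by the composite $(S^1,*)\to(M,S^1)\to(\R P^\infty,*)=(S^1,*)\to(\R P^\infty,*)$. Uniqueness in the universal property of the homotopy pullback then forces this candidate to be homotopic to $B\Phi$. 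The main subtlety is verifying that the identification of the two pullback squares in the proof of Theorem \ref{main} is coherent with the graph-product maps on $A_W$ and $W$, so that the universal property argument applies cleanly; this is precisely where the naturality of Proposition \ref{BG_Z} is used.
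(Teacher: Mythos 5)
Your reconstruction of part (2) is exactly what the proof of Theorem \ref{main} provides (the paper itself gives no argument beyond citing Corollary \ref{W_Z}, Theorem \ref{main} and the naturality of Proposition \ref{BG_Z}): the equivalence $B\Ad(X_W)\simeq Z(C(\Gamma_W);(M,S^1))$ is manufactured as an equivalence of homotopy pullbacks over the common bottom row, so $B\phi$ is carried to the left column of the second square, and the factorization through $(M/S^1,*)=(\R P^2,*)$ is immediate because the map of pairs kills $S^1$. This part is fine.

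The step in part (1) that needs more justification is the final appeal to ``uniqueness in the universal property of the homotopy pullback.'' For a general homotopy pullback $P=E\times_B^{h}B'$, a map $X\to P$ is \emph{not} determined up to homotopy by its two projections alone: one must also specify the homotopy filling the square, and the resulting ambiguity is controlled by $[X,\Omega B]$. Here $B=(\R P^\infty)^{c(W)}$, so $\Omega B\simeq(\Z/2)^{c(W)}$ is nontrivial and the ambiguity does not vanish for formal reasons. Your conclusion is nevertheless correct, for a reason specific to this situation which you should supply: all spaces in play are aspherical, so homotopy classes of maps $BA_W\to B\Ad(X_W)$ correspond to homomorphisms $A_W\to\Ad(X_W)$ up to conjugation, and since the square \eqref{pb_Ad} is a \emph{strict} pullback of groups (Theorem \ref{pullback_thm}), $\Ad(X_W)$ sits inside $W\times\Z^{c(W)}$ and a homomorphism into it is determined by its composites with $\phi$ and with the abelianization. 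Given two candidates $\alpha,\beta$ whose $\phi$-composites are conjugate by some $w\in W$ and whose abelianized composites agree, lifting $w$ along the surjection $\phi$ and conjugating $\beta$ by the lift makes both projections literally equal, whence $\alpha$ and $\beta$ are conjugate and the classifying space maps are homotopic. With that paragraph added, your proof of (1) closes; the rest of your verifications (that $\Phi_{\mathrm{ab}}$ is the evident isomorphism of $\Z^{c(W)}$'s, and that the candidate map has the right two projections) are correct and are the content the paper leaves implicit.
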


%%%%% Section 5 %%%%%

\section{Applications}

We give two applications of Theorem \ref{main} and first show a stable splitting of $B\Ad(X_W)$. We will use the following stable splitting of a polyhedral product, which was proved in \cite{BBCG} (See \cite{IK1} for a more precise proof of the naturality). Define the polyhedral smash product $\widehat{Z}(K;(X,A))$ in the same way as $Z(K;(X,A))$ by replacing the direct product with the smash product.

\begin{theorem}
\label{BBCG}
There is a homotopy equivalence
$$\Sigma Z(K;(X,A))\simeq\Sigma\bigvee_{\emptyset\ne I\subset[m]}\widehat{Z}(K_I;(X,A))$$
which is natural with respect to $(X,A)$.
\end{theorem}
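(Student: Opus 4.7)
The plan is to upgrade the classical James--Milnor stable splitting of a finite product to the polyhedral setting. Recall that for pointed spaces $X_1,\ldots,X_m$, the wedge of suspended smash projections $q_I\colon X_1\times\cdots\times X_m\to\bigwedge_{i\in I}X_i$, which collapse coordinates not in $I$ to the basepoint, assembles into a natural stable equivalence $\Sigma\prod_i X_i\simeq\Sigma\bigvee_{\emptyset\ne I\subset[m]}\bigwedge_{i\in I}X_i$. I would restrict this equivalence along the inclusion $Z(K;(X,A))\hookrightarrow X^m$.

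Concretely, for each $\emptyset\ne I\subset[m]$ define $p_I\colon Z(K;(X,A))\to\widehat{Z}(K_I;(X,A))$ as the restriction of $q_I\colon X^m\to\bigwedge_{i\in I}X$. This is well-defined since on the piece $(X,A)^\sigma$ the map $q_I$ factors through $\bigwedge_{i\in I}Y_i$ with $Y_i=X$ if $i\in\sigma\cap I$ and $Y_i=A$ otherwise, and $\sigma\cap I$ is a simplex of $K_I$. Assembling the $\Sigma p_I$ would yield a candidate natural stable map
$$\varphi\colon\Sigma Z(K;(X,A))\to\Sigma\bigvee_{\emptyset\ne I\subset[m]}\widehat{Z}(K_I;(X,A));$$
naturality in $(X,A)$ is immediate from the construction.

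To prove $\varphi$ is a homotopy equivalence, I would induct on the number of maximal simplices of $K$. If $K=K'\cup\sigma$ with $\sigma$ maximal, there is a homotopy pushout of polyhedral products along the inclusions $Z(\partial\sigma;(X,A))\to Z(K';(X,A))$ and $Z(\partial\sigma;(X,A))\to(X,A)^\sigma$; an analogous pushout exists on the smash side. Applying $\varphi$ yields a map between these pushouts. By the inductive hypothesis $\varphi$ is an equivalence on $Z(K';(X,A))$ and on $Z(\partial\sigma;(X,A))$, while on the attached full product $(X,A)^\sigma$ it coincides with the classical James splitting by construction. The main obstacle is the gluing step: verifying that the wedge summands produced by the pieces actually match the polyhedral smash products $\widehat{Z}(K_I;(X,A))$ on the nose, rather than some coarser collection of smash products of $X$'s and $A$'s. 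This combinatorial bookkeeping, together with the strictness required to obtain naturality in $(X,A)$, is precisely what is carried out in \cite{BBCG} and refined in \cite{IK1}.
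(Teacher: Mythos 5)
The paper does not prove this statement at all: it is quoted verbatim from \cite{BBCG}, with \cite{IK1} cited for the naturality, so there is no in-paper argument to compare yours against. Your sketch is a fair outline of the strategy actually used in \cite{BBCG}: restrict the James--Milnor splitting $\Sigma X^m\simeq\Sigma\bigvee_{\emptyset\ne I\subset[m]}\bigwedge_{i\in I}X$ along $Z(K;(X,A))\hookrightarrow X^m$, observe that $q_I$ carries $(X,A)^\sigma$ into the smash piece indexed by $\sigma\cap I\in K_I$ so that $p_I$ lands in $\widehat{Z}(K_I;(X,A))$, and then verify by induction over the simplices of $K$ that the assembled map is an equivalence. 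Your well-definedness argument for $p_I$ is correct, and the pushout $Z(K;(X,A))=Z(K';(X,A))\cup_{Z(\partial\sigma;(X,A))}(X,A)^\sigma$ for a maximal simplex $\sigma$ is the right inductive device.

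That said, as a self-contained proof your proposal has a genuine gap exactly where you flag it: the claim that the wedge summands produced by the three corners of the pushout reassemble into $\bigvee_{\emptyset\ne I\subset[m]}\widehat{Z}(K_I;(X,A))$ is the entire content of the theorem, and you defer it to the references. Concretely, one must check for each $I$ that $\widehat{Z}(K_I;(X,A))$ is the pushout of $\widehat{Z}(K'_I;(X,A))\leftarrow\widehat{Z}((\partial\sigma)_I;(X,A))\to\widehat{Z}(\sigma_I;(X,A))$ (which degenerates when $\sigma\not\subset I$), that the candidate map $\varphi$ strictly commutes with these decompositions, and that a wedge of compatible stable equivalences of pushout pieces induces an equivalence of the glued spaces. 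Naturality in $(X,A)$ is also not ``immediate'': the classical James splitting is only a stable homotopy equivalence, and arranging the splitting maps to be strictly natural is precisely the refinement carried out in \cite{IK1}. So your write-up is an accurate reading of how the cited proof goes, but it does not constitute an independent proof; in that respect it is on the same footing as the paper, which simply cites the result.
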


\begin{lemma}
\label{split}
The inclusion $Z(K;(X,*))\to Z(K;(X,A))$ has a left homotopy inverse after a suspension.
\end{lemma}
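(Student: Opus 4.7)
The plan is to reduce the problem to the BBCG stable splitting (Theorem \ref{BBCG}) and then exhibit the left homotopy inverse as the projection onto a sub-wedge. The inclusion $i\colon Z(K;(X,*))\to Z(K;(X,A))$ comes from the map of pairs $(X,*)\to(X,A)$ that is the identity on $X$, and Theorem \ref{BBCG} is natural with respect to such maps, so after one suspension we obtain a commutative diagram identifying $\Sigma i$ with the wedge of induced maps $\widehat{Z}(K_I;(X,*))\to\widehat{Z}(K_I;(X,A))$ indexed by $\emptyset\ne I\subset[m]$.

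Next I would analyse each wedge summand. For the pair $(X,*)$, the factor $(X,*)^\sigma_\wedge$ contains the basepoint $*$ in every coordinate $i\notin\sigma$, and a smash product with a one-point factor is a point; hence $(X,*)^\sigma_\wedge=*$ unless $\sigma=I$, and consequently
\[
\widehat{Z}(K_I;(X,*))=\begin{cases}X^{\wedge|I|},& I\in K,\\ *,& I\notin K.\end{cases}
\]
When $I\in K$ we have $K_I=\Delta^I$, the top simplex $\sigma=I$ contributes $X^{\wedge|I|}$ to $\widehat{Z}(\Delta^I;(X,A))$, and all other $\sigma\subsetneq I$ contribute subspaces $X^{\wedge|\sigma|}\wedge A^{\wedge|I\setminus\sigma|}$ mapping in through $A\hookrightarrow X$, so $\widehat{Z}(\Delta^I;(X,A))=X^{\wedge|I|}$ as well. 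The naturally induced map between these two copies of $X^{\wedge|I|}$ is the identity.

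Consequently, under the BBCG equivalences, $\Sigma i$ is homotopic to the inclusion of the sub-wedge $\Sigma\bigvee_{I\in K}X^{\wedge|I|}$ into the larger wedge $\Sigma\bigvee_{\emptyset\ne I\subset[m]}\widehat{Z}(K_I;(X,A))$. Such a wedge inclusion has an obvious left homotopy inverse, namely the projection that collapses the summands with $I\notin K$ to the basepoint. Pulling this projection back through the BBCG equivalences yields the required left homotopy inverse for $\Sigma i$.

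There is no real obstacle here; the only care needed is bookkeeping: verifying the identifications of $\widehat{Z}(K_I;(X,*))$ and $\widehat{Z}(\Delta^I;(X,A))$ from the definitions, and checking that the naturality in Theorem \ref{BBCG} does indeed make the induced map of wedges equal, summand by summand, to the identity on $X^{\wedge|I|}$ for $I\in K$ and to the constant map on the remaining summands.
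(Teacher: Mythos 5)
Your proof is correct and follows essentially the same route as the paper: both arguments apply the naturality of the BBCG splitting (Theorem \ref{BBCG}) to the map of pairs $(X,*)\to(X,A)$, observe that $\widehat{Z}(K_I;(X,*))=\widehat{Z}(K_I;(X,A))$ when $I$ is a simplex of $K$ and $\widehat{Z}(K_I;(X,*))=*$ otherwise, and conclude that $\Sigma Z(K;(X,*))$ sits as a wedge summand with the projection serving as the left homotopy inverse. Your version merely spells out the bookkeeping that the paper leaves implicit.
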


\begin{proof}
By definition, if $L$ is a full simplex with $n$ vertices, then $\widehat{Z}(L;(Y,B))$ is the smash product of $n$ copies of $Y$. Then $\widehat{Z}(K_I;(X,*))=\widehat{Z}(K_I;(X,A))$ if $I\subset[m]$ is a simplex of $K$. On the other hand, if $I$ is not a simplex of $K$, we have $\widehat{Z}(K_I;(X,*))=*$. Thus by Theorem \ref{BBCG}, $\Sigma Z(K;(X,*))$ is a wedge summand of $\Sigma Z(K;(X,A))$, up to homotopy, completing the proof.
\end{proof}

\begin{theorem}
For a finitely generated right-angle Coxeter group W,  the map $\Phi\colon BA_W\to B\Ad(X_W)$ has a left homotopy inverse after a suspension. In particular, there is a simply connected space $X$ such that
$$\Sigma B\Ad(X_W)\simeq\Sigma BA_W\vee\Sigma X.$$
\end{theorem}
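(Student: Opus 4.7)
The plan is to reduce both assertions to Lemma~\ref{split}, combined with a standard co-$H$-space splitting argument.

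By Corollary~\ref{phi_Phi}(1), after identifying $BA_W$ and $B\Ad(X_W)$ with the appropriate polyhedral products, the map $\Phi$ is the one induced by the composition of pairs $(S^1,*)\simeq(M,*)\to(M,S^1)$. Since the M\"obius band $M$ deformation retracts onto its core circle, the first map is a homotopy equivalence of pairs, so by naturality $\Phi$ factors up to homotopy as
$$Z(C(\Gamma_W);(S^1,*))\xrightarrow{\simeq}Z(C(\Gamma_W);(M,*))\to Z(C(\Gamma_W);(M,S^1)),$$
where the second arrow is exactly the inclusion considered in Lemma~\ref{split} applied to $(X,A)=(M,S^1)$. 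That lemma therefore supplies a left homotopy inverse to $\Sigma\Phi$, which proves the first assertion.

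For the second assertion, let $X$ be the (reduced) homotopy cofiber of $\Phi$, so that $\Sigma X$ is the homotopy cofiber of $\Sigma\Phi$. Because $\Sigma B\Ad(X_W)$ is a suspension and $\Sigma\Phi$ admits a left homotopy inverse, the standard co-$H$-space splitting argument (combine the retraction with the cofiber projection via the comultiplication of $\Sigma B\Ad(X_W)$) produces a homotopy equivalence
$$\Sigma B\Ad(X_W)\simeq\Sigma BA_W\vee\Sigma X.$$
Finally, by Proposition~\ref{Phi} the homomorphism $\Phi\colon A_W\to\Ad(X_W)$ is an epimorphism, so $B\Phi$ is surjective on $\pi_1$, and the van Kampen theorem implies that the mapping cone $X$ is simply connected.

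The one real subtlety is checking carefully the identification of $\Phi$ with the polyhedral product map coming from $(S^1,*)\simeq(M,*)\to(M,S^1)$; everything depends on being able to invoke Lemma~\ref{split} verbatim. Once that identification is in place, the retraction after suspension is immediate, the co-$H$-space splitting gives the wedge decomposition, and simple connectivity of $X$ follows from the surjectivity of $\Phi$ on fundamental groups.
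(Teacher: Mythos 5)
Your proposal is correct and follows essentially the same route as the paper: identify $\Phi$ via Corollary \ref{phi_Phi}(1) with the polyhedral-product inclusion, apply Lemma \ref{split} to get the left homotopy inverse after suspension, take the cofiber for the splitting, and invoke van Kampen (using surjectivity of $\Phi$ on $\pi_1$) for simple connectivity. The paper's proof is just a terser version of the same argument.
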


\begin{proof}
By Corollary \ref{phi_Phi} and Lemma \ref{split}, the first assertion follows. If we put $X$ to be the cofiber of $\Phi\colon BA_W\to B\Ad(X_W)$, then the second assertion follows from the first one, where $X$ is simply connected by the van Kampen theorem. 
\end{proof}

Let $W$ be a right-angled Coxeter group with the generating set $[m]$. We next calculate the mod 2 cohomology of $\Ad(X_W)$. By Corollary \ref{exact}, there is a homotopy fibration
\begin{equation}
\label{fibration_W}
(S^1)^{m}\to B\Ad(X_W)\to BW.
\end{equation}
We calculate the Serre spectral sequence of \eqref{fibration_W}, where we denote its $E_r$ by $E_r(W)$. For a subset $T\subset[m]$, let $W_T$ be the subgroup of $W$ generated by $T$. Then $W_T$ is a right-angled Coxeter group with the generating set $T$ such that $\Gamma_{W_T}=(\Gamma_W)_T$. By the naturality of Theorem \ref{pullback_thm}, one sees that for $V\subset U\subset[m]$, \eqref{fibration_W} for $W_V$ is a retract of \eqref{fibration_W} for $W_U$, implying the following.

\begin{lemma}
\label{retract}
For $V\subset U\subset[m]$, $E_r(W_V)$ is a retract of $E_r(W_U)$.
\end{lemma}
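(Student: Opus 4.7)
The plan is to obtain the retraction at the level of the homotopy fibration \eqref{fibration_W} itself, and then invoke the functoriality of the Serre spectral sequence. The remark preceding the lemma already asserts that \eqref{fibration_W} for $W_V$ is a retract of \eqref{fibration_W} for $W_U$, so the real content of the proof is just to record that argument carefully and then pass to $E_r$.

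First I would spell out the retraction of right-angled Coxeter groups. The inclusion $V \subset U$ induces the obvious homomorphism $\iota_{V,U}\colon W_V \to W_U$. In the opposite direction, sending each generator in $V$ to itself and each generator in $U\setminus V$ to the identity is compatible with the right-angled relations of $W_U$ (each relation either already lies in $W_V$ or collapses once one of its generators is killed), so it defines a retraction $\rho_{U,V}\colon W_U \to W_V$ with $\rho_{U,V}\circ \iota_{V,U} = \mathrm{id}_{W_V}$. Exactly the same formulas define a retraction of abelianizations $(\Z/2)^{|V|} \rightleftarrows (\Z/2)^{|U|}$, and likewise $\Z^{|V|} \rightleftarrows \Z^{|U|}$ on the upper-right vertex of \eqref{pb_Ad}, all compatible under the vertical maps.

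By the naturality of Theorem \ref{pullback_thm}, the pair $(\iota_{V,U},\rho_{U,V})$ lifts to the pullbacks: one obtains homomorphisms
$$\Ad(X_{W_V}) \xrightarrow{\tilde\iota} \Ad(X_{W_U}) \xrightarrow{\tilde\rho} \Ad(X_{W_V})$$
whose composite is the identity, and which are compatible via $\phi$ with $(\iota_{V,U},\rho_{U,V})$. Applying the classifying space functor and using Lemma \ref{pb_group} together with $c(W_T)=|T|$ for right-angled $W_T$, this exhibits the homotopy fibration \eqref{fibration_W} for $W_V$ as a retract, in the category of homotopy fibrations, of \eqref{fibration_W} for $W_U$; the induced map on fibres $(S^1)^{|V|} \rightleftarrows (S^1)^{|U|}$ is the standard coordinate inclusion and projection.

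Finally, the Serre spectral sequence is functorial in maps of fibrations, so the composite $\mathrm{id}\colon \eqref{fibration_W}_{W_V} \to \eqref{fibration_W}_{W_U} \to \eqref{fibration_W}_{W_V}$ induces the identity on $E_r(W_V)$ through $E_r(W_U)$ for every $r$, whence $E_r(W_V)$ is a retract of $E_r(W_U)$. There is no serious obstacle; the only point that needs a little care is the existence of the group-theoretic retraction $\rho_{U,V}$, and this is immediate from the right-angled presentation.
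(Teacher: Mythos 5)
Your proposal is correct and follows essentially the same route as the paper: the sentence immediately preceding the lemma is the paper's entire argument, namely that the naturality of the pullback description in Theorem \ref{pullback_thm} exhibits the fibration \eqref{fibration_W} for $W_V$ as a retract of the one for $W_U$, after which functoriality of the Serre spectral sequence gives the claim. You have simply spelled out the group-theoretic retraction $\rho_{U,V}$ and the lift to the pullbacks explicitly, which is a faithful elaboration of the paper's intended proof.
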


By Corollary \ref{W_Z}, one has $BW\simeq Z(C(\Gamma_W);(\R P^\infty,*))$. Then quite similarly to the calculation of the cohomology of $Z(K;(\C P^\infty,*))$ in \cite{DJ}, we see that
$$H^*(BW;\F_2)=\F_2[x_1,\ldots,x_m]/(x_ix_j\,\vert\,\{i,j\}\not\in E(\Gamma_W)),\quad|x_i|=1$$
where $E(\Theta)$ denotes the edge set of a graph $\Theta$.  By Theorem \ref{pullback_thm}, the homotopy fibration \eqref{fibration_W} is a homotopy pullback of the homotopy fibration $(S^1)^m\xrightarrow{2}(S^1)^m\to(\R P^\infty)^m$, implying that its local system of coefficients is trivial. Then one gets
$$E_2(W)\cong \F_2[x_1,\ldots,x_m]/(x_ix_j\,\vert\,\{i,j\}\not\in E(\Gamma_W))\otimes\Lambda(y_1,\ldots,y_m)$$
such that $d_2x_i=0$ and $d_2y_i=x_i^2$, where $|y_i|=1$. For $I\subset[m]$, put $z_{i,I}=x_iy_I$, where $y_I=\prod_{i\in I}y_i$ for $I\ne\emptyset$ and $y_\emptyset=1$. Then for $I\subset N_i$, we have $d_2z_{i,I}=0$ and $E_3(W)$ is generated by such $z_{i,I}$ as an algebra, where $N_i=\{j\,\vert\,\{i,j\}\not\in E(\Gamma_W)\}$. By definition, we have
\begin{align}
\label{rel1}
z_{i,I}z_{j,J}&=\begin{cases}0&\{i,j\}\not\in E(\Gamma_W)\text{ or }I\cap J\ne\emptyset\\z_{i,I-k}z_{j,J\sqcup\{k\}}&\{i,j\}\in E(\Gamma_W),\;I\cap J=\emptyset,\;k\in N_j\end{cases}
\end{align}
in $E_2(W)$. Moreover, since $d_2y_I=\sum_{i\in I}z_{i,\emptyset}z_{i,I-i}$, we have
\begin{equation}
\label{rel2}
\sum_{i\in I}z_{i,\emptyset}z_{i,I-i}=0\quad\text{if }I-i\subset N_i\text{ for any }i\in I
\end{equation}
in $E_3(W)$. Since these are all relations that $z_{i,I}\in E_3(W)$ satisfy, one gets:

\begin{lemma}
\label{E_3}
$E_3(W)$ is an algebra generated by $z_{i,I}$ for $i=1,\ldots,m$ and $I\subset N_i$ subject to the relations \eqref{rel1} and \eqref{rel2}, where $|z_{i,I}|=i+|I|$.
\end{lemma}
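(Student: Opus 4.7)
My plan is to let $\bar E$ be the $\F_2$-algebra with the presentation in the statement and to build an algebra map $\psi\colon \bar E\to E_3(W)$ sending each $z_{i,I}$ to the $d_2$-cohomology class of $x_iy_I$; I will then argue that $\psi$ is an isomorphism by checking surjectivity and injectivity separately.

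Well-definedness comes from three short computations. First, if $I\subset N_i$, then $d_2(x_iy_I)=\sum_{j\in I}x_ix_j^2 y_{I-j}=0$ since $x_ix_j=0$ for distinct $i,j$ with $j\in N_i$, so each $z_{i,I}$ is indeed a cycle. Second, the relations \eqref{rel1} hold already in $E_2(W)$: expanding $z_{i,I}z_{j,J}=x_ix_jy_Iy_J$, the product vanishes when $\{i,j\}\notin E(\Gamma_W)$ or $I\cap J\neq\emptyset$, while otherwise the value $x_ix_jy_{I\cup J}$ does not depend on how $I\cup J$ is distributed between the two $y$-factors. Third, \eqref{rel2} holds in $E_3(W)$ because $\sum_{i\in I}z_{i,\emptyset}z_{i,I-i}=\sum_{i\in I}x_i^2y_{I-i}=d_2(y_I)$ is a boundary.

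For surjectivity I plan to argue that any cycle of $E_2(W)$ is cohomologous to a polynomial in the $z_{i,I}$'s. A basis monomial $x^ay^I$ that is itself a cycle can be factored: the cycle condition $d_2(x^ay^I)=0$ forces, for each $j\in I$, the existence of some $i\in\mathrm{supp}(a)\setminus\{j\}$ with $j\in N_i$, so choosing such a partner for each $j$ partitions $I=\bigsqcup_{i} I_i$ with $I_i\subset N_i$ and gives $x^ay^I=\prod_{i} z_{i,I_i}$. A general cycle is an $\F_2$-linear combination of basis monomials whose non-cycle parts cancel in $d_2$; these cancellations are exactly images $d_2(y_J)$ of the form prescribed by \eqref{rel2}, so modulo boundaries the cycle reduces to a polynomial in the $z_{i,I}$'s.

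The main obstacle is injectivity. I plan to exhibit a normal form for words $z_{i_1,I_1}\cdots z_{i_r,I_r}$: use the second case of \eqref{rel1} to push each shared index $k$ into a canonically chosen factor (say, the one with the smallest $i_\ell$ such that $k\in N_{i_\ell}$), and use \eqref{rel2} to eliminate one distinguished generator per $\Gamma_W$-independent subset $I$. Counting the resulting reduced monomials in each bidegree against an explicit basis of $E_3(W)$, obtained by viewing $E_2(W)$ as the Koszul complex for the sequence $(x_1^2,\ldots,x_m^2)$ in the face ring $A=\F_2[x_1,\ldots,x_m]/(x_ix_j:\{i,j\}\notin E(\Gamma_W))$, will complete the proof. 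To organize this combinatorial bookkeeping I would induct on $|S|$ using the retract property of Lemma~\ref{retract}, reducing to the case where every vertex participates nontrivially in the monomial at hand.
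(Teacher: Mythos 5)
Your framework is sound and your well-definedness checks are correct: the verification that each $z_{i,I}=x_iy_I$ with $I\subset N_i$ is a $d_2$-cycle, that \eqref{rel1} holds already in $E_2(W)$, and that \eqref{rel2} is the boundary $d_2(y_I)$ all match what the paper records before the lemma. Your factorization of a single monomial cycle $x^ay^I$ into a product of $z_{i,I_i}$'s is also correct (the cycle condition forces $x^ax_j^2=0$ for each $j\in I$, which supplies the required partner $i$ with $j\in N_i$). However, the two steps that carry the actual content of the lemma are not proved. For surjectivity, the claim that a general cycle differs from a sum of monomial cycles by boundaries ``of the form prescribed by \eqref{rel2}'' is precisely the assertion that needs an argument: the Koszul complex of $(x_1^2,\dots,x_m^2)$ over the face ring admits cycles whose individual monomial terms are not cycles, and showing that all such cross-cancellations are accounted for by boundaries is the heart of the computation, not a consequence of it. For injectivity, you defer to a normal form whose confluence under \eqref{rel1} and \eqref{rel2} is never checked, and to ``an explicit basis of $E_3(W)$'' that is never produced; since exhibiting that basis is equivalent to the lemma, this is circular as written.

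In fairness, the paper's own proof is no more than the sentence ``Since these are all relations that $z_{i,I}\in E_3(W)$ satisfy, one gets:'' --- it asserts both generation and completeness of the relations without argument, so your proposal is at least as explicit as the source. But as a self-contained proof it has a genuine gap at exactly the two points where the paper is silent: you have restated generation and completeness as goals with plausible strategies (Koszul homology of the face ring, a rewriting system for the relations) rather than carried either one out. To close the gap you would need either to actually compute the Koszul homology $H_*(A\otimes\Lambda(y_\bullet),d)$ for $A=\F_2[x_1,\dots,x_m]/(x_ix_j:\{i,j\}\notin E(\Gamma_W))$ using its $\Z^m$-multigrading (each multidegree component is a finite-dimensional linear algebra problem, which makes the monomial-cycle reduction tractable), or to verify that your rewriting rules are terminating and confluent and then match the count of normal forms against that homology.
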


We show that $E_r(W)$ collapses for $r=3$ by considering the special case that $\Gamma_W=K_{m-1}\sqcup\{m\}$, where $K_n$ is the complete graph with $n$ vertices. If $\Gamma_W=K_{m-1}\sqcup\{m\}$, then $C(\Gamma_W)=\Delta^{[m-1]}\sqcup\{m\}$, where $\Delta^S$ is the full simplex on the vertex set $S$.

\begin{lemma}
\label{dim}
For a right-angled Coxeter group $W$ with $\Gamma_W=K_{m-1}\sqcup\{m\}$, the dimension of $H^m(\Ad(X_W);\F_2)$ is $m+1$.
\end{lemma}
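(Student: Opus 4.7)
My plan is to use Theorem \ref{main} to realize $B\Ad(X_W)$ as a concrete polyhedral product and then read off $H^m$ via a single Mayer--Vietoris argument. Since $\Gamma_W = K_{m-1} \sqcup \{m\}$, the flag complex $C(\Gamma_W)$ is $\Delta^{[m-1]} \sqcup \{m\}$, and a direct inspection of the definition of a polyhedral product shows
$$B\Ad(X_W) \simeq Z := U \cup V \subset M^m, \qquad U = M^{m-1} \times S^1,\qquad V = (S^1)^{m-1} \times M,$$
where the $S^1$- and $M$-factors of $U$ and $V$ sit at the last coordinate, each $S^1$ meaning $\partial M \subset M$. Then $U \cap V = (S^1)^m$, and I would compute $H^m(Z;\F_2)$ from the Mayer--Vietoris sequence of $Z = U \cup V$.

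The only geometric input required is that the boundary inclusion $\partial M \hookrightarrow M$ is homotopic to the double cover of the core circle and hence induces the zero map on $H^1(-;\F_2)$. Choosing presentations
$$H^*(U;\F_2) = \Lambda(a_1,\ldots,a_{m-1},b),\quad H^*(V;\F_2) = \Lambda(c_1,\ldots,c_{m-1},d),\quad H^*(U\cap V;\F_2) = \Lambda(e_1,\ldots,e_m)$$
on degree-$1$ generators (with $b$ and $d$ corresponding to the last factor), the restriction maps then read off as $a_i \mapsto 0$, $b \mapsto e_m$ and $c_i \mapsto e_i$, $d \mapsto 0$.

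With these in hand, the rest is a short degree count in the Mayer--Vietoris sequence. In degree $m$ the differential $\psi^m \colon H^m(U) \oplus H^m(V) \to H^m(U\cap V)$ vanishes, since the top classes $a_1\cdots a_{m-1}b$ and $c_1\cdots c_{m-1}d$ each carry a factor that dies under restriction; so $\ker \psi^m \cong \F_2^2$. In degree $m-1$, every basis element of $H^{m-1}(U) \oplus H^{m-1}(V)$ contains a killed factor except $c_1\cdots c_{m-1} \in H^{m-1}(V)$, whose image is $e_1\cdots e_{m-1}$. Thus $\psi^{m-1}$ has one-dimensional image in the $m$-dimensional $H^{m-1}(U\cap V)$, contributing a cokernel of dimension $m - 1$. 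Assembling the two contributions in the Mayer--Vietoris sequence gives $\dim H^m(Z;\F_2) = 2 + (m - 1) = m + 1$.

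The only genuinely delicate step is confirming that $\partial M \hookrightarrow M$ is zero on $H^1(-;\F_2)$; once that double-cover identification is on the table, the rest is bookkeeping in a long exact sequence.
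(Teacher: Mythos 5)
Your argument is correct and reaches the right answer, but it takes a more direct route than the paper. You work with the unsuspended polyhedral product itself, writing $Z(C(\Gamma_W);(M,S^1))=U\cup V$ with $U=M^{m-1}\times S^1$, $V=(S^1)^{m-1}\times M$, $U\cap V=(S^1)^m$, and run a single Mayer--Vietoris sequence; the paper instead first applies the Bahri--Bendersky--Cohen--Gitler splitting (Theorem \ref{BBCG}) to decompose $\Sigma B\Ad(X_W)$ into polyhedral smash products $\widehat{Z}(C(\Gamma_W)_I;(M,S^1))$, identifies which summands (those with $I=J\sqcup\{m\}$, $|J|\ge m-2$) can contribute to $H^m$, and then computes each summand by a Mayer--Vietoris of the same shape at the smash level. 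Both proofs hinge on exactly the same geometric input, namely that $\partial M\hookrightarrow M$ is a degree-$2$ map onto the core circle and hence trivial on $H^1(-;\F_2)$, and your restriction maps $a_i\mapsto 0$, $b\mapsto e_m$, $c_i\mapsto e_i$, $d\mapsto 0$ and the resulting count $\dim H^m=\dim\ker\psi^m+\dim\operatorname{coker}\psi^{m-1}=2+(m-1)$ are right. Your version is more elementary and self-contained (no stable splitting needed), while the paper's version keeps the bookkeeping aligned with the wedge decomposition it uses elsewhere. One small caveat: your step ``every basis element of $H^{m-1}(U)$ contains a killed factor'' needs $m\ge 3$ (for $m=2$ the class $b$ survives, and indeed $\dim H^2=2$, not $3$, in that case); the paper's proof has the same implicit restriction at $J=\emptyset$, and the lemma is only ever needed for $m\ge 3$, so this is harmless but worth flagging.
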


\begin{proof}
By Theorems \ref{main} and \ref{BBCG}, one has
$$\Sigma B\Ad(X_W)\simeq\Sigma\bigvee_{\emptyset\ne I\subset[m]}\widehat{Z}(C(\Gamma_W)_I;(M,S^1)).$$
Note that $C(\Gamma_W)_I=\Delta^I$ for $m\not\in I$ and $C(\Gamma_W)_I=\Delta^{I-m}\sqcup\{m\}$ for $m\in I$. By definition, $\widehat{Z}(\Delta^I;(M,S^1))=\bigwedge_{i\in I}M\simeq S^{|I|}$. When $m\in I$, we have $\widehat{Z}(\Delta^{I-m}\sqcup\{m\};(M,S^1))=U\cup V$ for $U=(\bigwedge_{i\in I-m}M)\wedge S^1$ and $V=(\bigwedge_{i\in I-m}S^1)\wedge M$ such that $U\cap V=\bigwedge^{|I|}S^1=S^{|I|}$. Since the inclusions of $U\cap V$ into $U$ and $V$ are trivial in the mod 2 cohomology, by the Mayer-Vietoris sequence of this cover, we get
$$H^*(\widehat{Z}(\Delta^{I-m}\sqcup\{m\};(M,S^1));\F_2)\cong\begin{cases}\F_2&*=0,|I|+1\\\F_2\oplus\F_2&*=|I|\\0&\text{otherwise}.\end{cases}$$
Thus $H^m(\Ad(X_W);\F_2)\cong\bigoplus_{J\subset[m-1],\,|J|\ge m-2}H^m(\widehat{Z}(\Delta^J\sqcup\{m\};(M,S^1));\F_2)\cong\F_2^{m+1}$, completing the proof.
\end{proof}

Suppose that $\Gamma_W=\Delta^{[m-1]}\sqcup\{m\}$. By Lemma \ref{E_3}, $E_3^m(W)$ is spanned by $z_{m,[m-1]}$, $z_{1,\emptyset}\cdots z_{m-2,\emptyset}z_{m-1,\{m\}}$ and $z_{m,\emptyset}z_{m,[m-1]-i}$ for $i=1,\ldots,m-1$. Then it follows from Lemma \ref{dim} that these elements are permanent cycles. Thus one gets:

\begin{lemma}
\label{permanent}
If $\Gamma_W=\Delta^{[m-1]}\sqcup\{m\}$, then $z_{m,[m-1]}\in E_3(W)$ is a permanent cycle.
\end{lemma}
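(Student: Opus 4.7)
The plan is a dimension count at total degree $m$ in the Serre spectral sequence for the fibration \eqref{fibration_W}. From Lemma \ref{E_3}, combined with the neighborhoods $N_i=\{m\}$ for $i<m$ and $N_m=[m-1]$ in the graph $K_{m-1}\sqcup\{m\}$, the enumeration carried out just before the statement exhibits an explicit spanning set of $E_3^m(W)$ of size $m+1$: namely $z_{m,[m-1]}$, $z_{1,\emptyset}\cdots z_{m-2,\emptyset}z_{m-1,\{m\}}$, and $z_{m,\emptyset}z_{m,[m-1]-i}$ for $i=1,\ldots,m-1$. Thus $\dim_{\F_2}E_3^m(W)\le m+1$.

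On the other hand, Lemma \ref{dim} gives $\dim_{\F_2}H^m(\Ad(X_W);\F_2)=m+1$. Since the spectral sequence converges to $H^*(\Ad(X_W);\F_2)$ and each $E_\infty^{p,q}$ is a subquotient of $E_3^{p,q}$, we obtain
$$m+1=\sum_{p+q=m}\dim E_\infty^{p,q}\le\sum_{p+q=m}\dim E_3^{p,q}=\dim E_3^m(W)\le m+1.$$
Equality throughout forces $E_3^{p,q}=E_\infty^{p,q}$ for every bidegree with $p+q=m$. Consequently no nontrivial differential on any page $r\ge 3$ can originate at or terminate in total degree $m$, so every element of $E_3^m(W)$ is a permanent cycle; specialising to $z_{m,[m-1]}$ yields the lemma.

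The main obstacle is not the spectral sequence comparison, which is immediate from the dimension match, but rather the preparatory enumeration of a size-$(m+1)$ spanning set of $E_3^m(W)$ via \eqref{rel1} and \eqref{rel2}. That enumeration must rule out any further algebraic relations cutting the list of monomials, and this is where the very simple shape of the neighborhoods $N_i$ in $K_{m-1}\sqcup\{m\}$ is essential. Since that bookkeeping is done in the paragraph preceding the statement, the proof of Lemma \ref{permanent} reduces to the two-line dimension argument above.
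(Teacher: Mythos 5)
Your argument is correct and is essentially the paper's own proof: the paper likewise lists the same $(m+1)$-element spanning set of $E_3^m(W)$ from Lemma \ref{E_3} and concludes from the dimension count of Lemma \ref{dim} that all of these elements, in particular $z_{m,[m-1]}$, are permanent cycles. You merely make explicit the inequality chain $m+1=\sum_{p+q=m}\dim E_\infty^{p,q}\le\dim E_3^m(W)\le m+1$ that the paper leaves implicit.
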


Let $W$ be an arbitrary right-angled Coxeter group on the vertex set $[m]$. Let $I\subset N_i$ and $W'$ be the right-angled Coxeter group with the generating set $I\sqcup\{i\}$ such that $\Gamma_{W'}=K_{|I|}\sqcup\{i\}$. Since $\Gamma_{W_{I\sqcup\{i\}}}\subset\Gamma_{W'}$, by the naturality of graph products of groups, there is a homotopy commutative diagram
$$\xymatrix{(S^1)^{|I|+1}\ar[r]\ar@{=}[d]&B\Ad(X_{W_{I\sqcup\{i\}}})\ar[r]\ar[d]&BW_{I\sqcup\{i\}}\ar[d]\\
(S^1)^{|I|+1}\ar[r]&B\Ad(X_{W'})\ar[r]&BW'.}$$
Then we get a map $E_r(W')\to E_r(W_{I\sqcup\{i\}})$ which maps $z_{i,I}\in E_3(W')$ to $z_{i,I}\in E_3(W_{I\sqcup\{i\}})$, so by Lemma \ref{permanent}, $z_{i,I}\in E_3(W_{I\sqcup\{i\}})$ is a permanent cycle. Thus by Lemma \ref{retract}, $z_{i,I}\in E_3(W)$ is a permanent-cycle too and we obtain:

\begin{lemma}
\label{collapse}
The Serre spectral sequence of \eqref{fibration_W} collapses at $E_3$.
\end{lemma}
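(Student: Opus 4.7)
The proof is essentially already assembled in the material leading up to the statement; my plan just has to tie it together with one observation about the multiplicative structure of the Serre spectral sequence.

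Lemma \ref{E_3} presents $E_3(W)$ as an $\F_2$-algebra generated by the classes $z_{i,I}$ with $i\in[m]$ and $I\subset N_i$, and the paragraph immediately preceding the statement of the lemma verifies that each of these generators is a permanent cycle in $E_3(W)$: for fixed $(i,I)$ one embeds $\Gamma_{W_{I\sqcup\{i\}}}$ into $\Gamma_{W'}$ where $W'$ is the auxiliary right-angled Coxeter group with $\Gamma_{W'}=K_{|I|}\sqcup\{i\}$, applies Lemma \ref{permanent}, and then transports the result back to $E_3(W)$ via the retraction of Lemma \ref{retract}. I would therefore only need to invoke the standard fact that, for $r\ge 3$, the differential $d_r$ of the cohomology Serre spectral sequence of \eqref{fibration_W} is a derivation with respect to the cup product on $E_r(W)$. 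Since every algebra generator of $E_3(W)$ is a $d_3$-cycle, the Leibniz rule forces $d_3\equiv 0$ on all of $E_3(W)$; hence $E_4(W)=E_3(W)$, the same classes remain algebra generators on the next page, and an immediate induction on $r$ gives $d_r=0$ for every $r\ge 3$, which is the desired collapse.

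I do not expect any substantive obstacle. The only subtlety worth flagging is that the retraction provided by Lemma \ref{retract} must be compatible with the multiplicative structure of the spectral sequence; but this compatibility is automatic, since the retract comes from a map of fibrations induced by the naturality of the pullback square \eqref{pb_Ad} applied to the inclusion $W_V\hookrightarrow W_U$.
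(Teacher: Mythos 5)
Your proposal is correct and follows essentially the same route as the paper: the paper's argument is precisely that every algebra generator $z_{i,I}$ of $E_3(W)$ is a permanent cycle (via the comparison with $W'$, Lemma \ref{permanent}, and the retraction of Lemma \ref{retract}), with the multiplicativity of the Serre spectral sequence then forcing collapse. You merely make explicit the Leibniz-rule step and the compatibility of the retraction with products, both of which the paper leaves implicit.
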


For a degree reason, the extension of $E_\infty(W)$ to $H^*(\Ad(X_W);\F_2)$ is trivial. Therefore by Lemmas \ref{E_3} and \ref{collapse}, we finally obtain:

\begin{theorem}
For a right-angled Coxeter group $W$ with the generating set $[m]$, the mod 2 cohomology of $\Ad(X_W)$ is an algebra generated by $z_{i,I}$ for $i=1,\ldots,m$ and $I\subset N_i$ subject to the relations \eqref{rel1} and \eqref{rel2}, where $|z_{i,I}|=i+|I|$.
\end{theorem}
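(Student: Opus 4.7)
The plan is short given the two preceding lemmas, so most of the effort goes into checking that the Serre spectral sequence of \eqref{fibration_W} has no nontrivial multiplicative extension. By Lemma \ref{collapse} the sequence collapses at $E_3$, and by Lemma \ref{E_3} we have $E_\infty(W)=E_3(W)$ identified with the algebra $A$ presented by the $z_{i,I}$ modulo \eqref{rel1} and \eqref{rel2}. Hence $A$ is the associated graded of $H^*(\Ad(X_W);\F_2)$ under the Serre filtration, which pins down the underlying graded vector space.

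To upgrade this to an isomorphism of algebras I would lift each generator $z_{i,I}$ to a class $\tilde z_{i,I}\in H^{1+|I|}(\Ad(X_W);\F_2)$ and verify that the relations \eqref{rel1} and \eqref{rel2} hold on the nose for these lifts. Since each $z_{i,I}=x_iy_I$ lives in base-filtration exactly $1$, every quadratic relation among the $\tilde z_{i,I}$ holds modulo Serre filtration $\ge 3$. The \emph{degree reason} for extension-triviality is that in each such relation the potential correction lies in the subspace of $H^*(\Ad(X_W);\F_2)$ of base-filtration $\ge 3$ and the fixed total degree of the relation, and a bidegree count using the explicit presentation of $A$ leaves no room for a nonzero correction. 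Concretely, the vanishings in \eqref{rel1} when $\{i,j\}\notin E(\Gamma_W)$ reduce to $\phi^*(x_ix_j)=0$ in $H^*(BW;\F_2)$, the vanishings when $I\cap J\ne\emptyset$ to exterior squares of $y$-classes, and \eqref{rel2} records the transgression $d_2y_I=\sum_{i\in I}z_{i,\emptyset}z_{i,I-i}$.

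The main obstacle is phrasing the bidegree argument uniformly enough to preclude all possible corrections in one go; in principle it is a finite per-relation check, but one wants a clean formulation, for instance the observation that the part of $H^*(\Ad(X_W);\F_2)$ of base-filtration $\ge 3$ in a given total degree is spanned by products of three or more of the $\tilde z_{i,I}$, which are then pinned down to their $A$-values by induction on filtration. Once this is in hand, the presentation of $A$ transfers verbatim to $H^*(\Ad(X_W);\F_2)$ and the theorem follows directly from Lemmas \ref{E_3} and \ref{collapse}.
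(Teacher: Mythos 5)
Your proposal follows the paper's proof exactly: the paper likewise deduces the theorem from Lemma \ref{collapse} (collapse at $E_3$) and Lemma \ref{E_3} (the presentation of $E_3(W)$), and disposes of the multiplicative extension problem with the single phrase ``for a degree reason.'' Your more detailed discussion of lifting the generators $z_{i,I}$ (which, as you correctly note, live in degree $1+|I|$ and base-filtration $1$) and checking the relations modulo higher filtration is just an expansion of that same step, so the approach is essentially identical.
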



\begin{thebibliography}{W}
\bibitem{A}T. Akita, \emph{The adjoint group of a Coxeter quandle}, \url{arXiv:1702.07104v2}.
\bibitem{AFGV}N. Andruskiewitsch, F. Fantino, G. A. Garc\'{i}a, and L. Vendramin, \emph{On Nichols algebras associated to simple racks, Groups, algebras and applications}, Contemp. Math. \textbf{537}, Amer. Math. Soc., Providence, RI, 2011, pp. 31-56.
\bibitem{BBCG}A. Bahri, M. Bendersky, F.R. Cohen, and S. Gitler, \emph{The polyhedral product functor: a method of decomposition for moment-angle complexes, arrangements and related spaces}, Advances in Math. \textbf{225} (2010), 1634-1668.
\bibitem{BMMN}N. Brady, J.P. McCammond, B. M\"uhlherr, and W.D. Neumann, \emph{Rigidity of Coxeter groups and Artin groups}, Geom. Dedicata \textbf{94} (2002), 91-109.
%\bibitem[BP]{BP}V.M. Buchstaber and T.E. Panov, {\it Torus actions and their applications in topology and combinatorics}, University Lecture Series {\bf 24}, American Mathematical Society, Providence, RI, 2002. 
%\bibitem{D}M.W. Davis, {\it The geometry and topology of Coxeter groups}, London Mathematical Society Monographs Series {\bf 32}, Princeton University Press, Princeton, NJ, 2007.
\bibitem{DJ}M.W. Davis and T. Januszkiewicz, \emph{Convex polytopes, Coxeter orbifolds and torus actions}, Duke Math. J. \textbf{62} (1991), 417-451.
\bibitem{DO}M. Davis and B. Okun, \emph{Cohomology computations for Artin groups, Bestvina-Brady groups,
and graph products}, Groups Geom. Dyn. \textbf{6} (3) (2012), 485-531.
\bibitem{DS}G. Denham and A. Suciu, \emph{Moment-angle complexes, monomial ideals, and Massey products}, Pure Appl. Math. \textbf{3} (2007), 25-60.
\bibitem{E}M. Eisermann, \emph{Quandle coverings and their Galois correspondence}, Fund. Math. \textbf{225} (2014), no. 1, 103-168.
\bibitem{F}E.D. Farjoun, \emph{Cellular spaces, null spaces and homotopy localization}, Lecture Notes in Mathematics {\bf 1622}, Springer-Verlag, Berlin, 1996.
\bibitem{IK1}K. Iriye and D. Kishimoto \emph{Decompositions of suspensions of spaces involving polyhedral products}, Algebr. Geom. Topol. \textbf{16} (2016), 825-841.
\bibitem{IK2}K. Iriye and D. Kishimoto, \emph{Fat wedge filtrations and decomposition of polyhedral products}, to appear in Kyoto J. Math.
\bibitem{N}T. Nosaka, \emph{Central extensions of groups and adjoint groups of quandles}, \url{arXiv:1505.03077}.
%\bibitem{P}L. Paris, \emph{Lectures on Artin groups and the $K(\pi,1)$ conjecture}, Groups of exceptional type, Coxeter groups and related geometries, Springer Proc. Math. Stat., vol. 82, Springer, New Delhi, 2014, pp. 239-257.
%\bibitem{S}M. Stafa, \emph{On the fundamental group of certain polyhedral products}, J. Pure Appl. Alg. \textbf{219} (2015), 2279-2299.
\bibitem{W}J.H.C. Whitehead, \emph{On the asphericity of regions in a $3$-sphere}, Fund. Math. \textbf{32} (1939), no. 1, 149-166.
\end{thebibliography}
\end{document}